\newsavebox\ltmcbox
\setlist[enumerate]{labelsep=*, leftmargin=1.5pc}
\setlist[enumerate]{label=\normalfont(\roman*), ref=\roman*}
\renewcommand{\emptyset}{\varnothing}
\newcommand{\orig}{\mathbf{0}}
\newcommand{\QQ}{\mathbb{Q}}
\newcommand{\ZZ}{\mathbb{Z}}
\newcommand{\PP}{\mathbb{P}}
\newcommand{\CC}{\mathbb{C}}
\newcommand{\NRR}{{N_\QQ}}
\newcommand{\MRR}{{M_\QQ}}
\DeclareMathOperator{\GL}{GL}
\DeclareMathOperator{\Gr}{Gr}
\newcommand{\calH}{\mathcal{H}}
\newcommand{\calN}{\mathcal{N}}
\newcommand{\calX}{\mathcal{X}}
\newcommand{\mcc}{\mathcal{C}}
\newcommand{\Pic}[1]{\operatorname{Pic}\mleft({#1}\mright)}
\newcommand{\Cstar}{\CC^\times}
\newcommand{\abs}[1]{\left\vert{#1}\right\vert}
\newcommand{\Hom}[1]{\operatorname{Hom}\mleft({#1}\mright)}
\newcommand{\V}[1]{\operatorname{vert}\mleft({#1}\mright)}
\newcommand{\bdry}[1]{\partial{#1}}
\newcommand{\intr}[1]{{#1}^\circ}
\newcommand{\conv}[1]{\operatorname{conv}\mleft({#1}\mright)}
\newcommand{\sconv}[1]{\operatorname{conv}\mleft\{{#1}\mright\}}
\newcommand{\scone}[1]{\operatorname{cone}\mleft\{{#1}\mright\}}
\newcommand{\Vol}[1]{\operatorname{Vol}\mleft({#1}\mright)}
\newcommand{\dual}[1]{{#1}^*}
\newcommand{\Newt}[1]{\operatorname{Newt}\mleft({#1}\mright)}
\DeclareMathOperator{\mmin}{min}
\DeclareMathOperator{\mmax}{max}
\DeclareMathOperator{\coeff}{coeff}
\newcommand{\hmin}{{h_{\mmin}}}
\newcommand{\hmax}{{h_{\mmax}}}
\newcommand{\oddrow}{\rowcolor[gray]{0.95}}
\newcommand{\evnrow}{}
\DeclareMathOperator{\spec}{Spec}
\DeclareMathOperator{\Hilb}{Hilb}
\DeclareMathOperator{\proj}{Proj}
\DeclareMathOperator{\Amp}{Amp}
\newcommand\Tstrut{\rule{0pt}{3ex}}
\newcommand\Bstrut{\rule[-1.3ex]{0pt}{0pt}}
\newcommand{\GIT}{/\!\!/}
\newcommand{\XX}[2]{X_{{#1}\text{--}{#2}}}
\newcommand{\ff}[2]{f_{{#1}\text{--}{#2}}}
\newcommand{\FF}[2]{F_{{#1}\text{--}{#2}}}
\newcommand{\Fequiv}{\stackrel{F}{\sim}}
\newtheorem{theorem}{Theorem}[section]
\newtheorem{lemma}[theorem]{Lemma}
\newtheorem{proposition}[theorem]{Proposition}
\newtheorem{corollary}[theorem]{Corollary}
\newtheorem{conjecture}[theorem]{Conjecture}
\theoremstyle{definition}
\newtheorem{definition}[theorem]{Definition}
\newtheorem{example}[theorem]{Example}
\newtheorem{remark}[theorem]{Remark}
\newtheorem{claim}[theorem]{Claim}
\newtheorem{principle}[theorem]{Principle}
\begin{document}

\author[A.\,M.\,Kasprzyk]{Alexander Kasprzyk}
\address{School of Mathematical Sciences\\University of Nottingham\\Nottingham, NG$7$\ $2$RD\\UK}
\email{a.m.kasprzyk@nottingham.ac.uk}

\author[L.\,Katzarkov]{Ludmil Katzarkov}
\address{University of Miami and HSE University, Russian Federation, Laboratory of Mirror Symmetry.}
\email{lkatzarkov@gmail.com}

\author[V.\,Przyjalkowski]{Victor Przyjalkowski}
\address{Steklov Mathematical Institute and HSE University, Russian Federation, Laboratory of Mirror Symmetry}
\email{victorprz@mi-ras.ru, victorprz@gmail.com}

\author[D.\,Sakovics]{Dmitrijs Sakovics}
\address{IBS Center for Geometry and Physics\\Pohang University of Science and Technology\\Pohang $37673$\\Korea}
\email{dmitry85@ibs.re.kr}

\title{Projecting Fanos in the mirror}
\begin{abstract}
In the paper ``Birational geometry via moduli spaces'' by I.\,Cheltsov, L.\,Katzarkov, and V.\,Przyjalkowski
a new structure connecting toric degenerations of smooth Fano threefolds by projections was introduced; using Mirror Symmetry
these connections were transferred to the side of Landau--Ginzburg models. In the paper mentioned above
a nice way to connect of Picard rank one Fano threefolds was found. We apply this approach to all smooth Fano threefolds,
connecting their degenerations by toric basic links. In particular, we find a lot of Gorenstein toric degenerations of smooth Fano threefolds we need.
We implement mutations in the picture as well.
It turns out that appropriate chosen toric degenerations of the Fanos are given by toric basic links
from a few roots.
We interpret the relations we found in terms of Mirror Symmetry.
\end{abstract}
\maketitle
\setcounter{tocdepth}{1}
\tableofcontents
\section{Introduction}\label{section:Introduction}

One of the central topics of research  in birational geometry are Fano varieties ---  varieties  with ample anticanonical bundle.
They play a  crucial role in Minimal Model Program and present a reach geometric picture.
Fano varieties are central in  Mirror Symmetry --- many  of constructions of mirror duality are either   Calabi--Yau manifolds  or for Fano varieties.

Classification problem for smooth Fano varieties goes back to XIX century. Due to Riemann, the only Fano curve is a projective line $\PP^1$.
Pasquale del Pezzo classified smooth Fano surfaces; now they are named after him. He showed that these surfaces (with very ample anticanonical class)
are non-degenerate surfaces of degree $n$ in $\PP^n$; now we also add two trigonal examples of degrees $2$ and $1$. All these surfaces have degree at most $9$
and form an irreducible family for each degree with an exception of degree $8$ where there are two irreducible families.
Modern (that is, classical, not pre-classical) description of del Pezzo surfaces is as blow ups of general enough points on $\PP^2$ together with
a quadric surface. In other words, they are projections of anticanonically embedded $\PP^2\subset \PP^9$ from general enough points (again together with a quadric).
If we choose any points as centers of projection we get singular del Pezzo surfaces because in this case the projection may contract
lines through the point; however anyway we arrive to the same family, so smooth del Pezzo surfaces can be obtained as smoothings
of projected singular ones.

Classification of Fano threefolds is  more tricky. It was initiated  by Gino Fano and  developed later by Iskovskikh~\cite{Is77},~\cite{Is78}. (Iskovskikh gave the modern definition of Fano varieties
and named them after Fano). Soon after this Mori and Mukai, using Iskovskikh approach and Minimal Model Program, classified all smooth Fano threefolds~\cite{MM81};
it turned out to be $105$ families of them (the last one was found in 2002 in~\cite{MM03}). There is no classification in higher dimensions. By Koll\'ar--Miyaoka--Mori, there is a finite number
of families of Fano varieties in any given dimension. However already in dimension $4$ it is expected that the number of  families of Fano varieties is very big.

Unlike the two-dimensional case, there is no structure in the list of Fano threefolds (see~\cite{IP99}) relating one with each other systematically.
An approach to get such structure is given in~\cite{CKP13}. The idea  behind the approach is the following.
Similarly to the two-dimensional case, one can relate all Fano variety themselves to  some "specific" varieties (not necessary smooth).
A class of simple relations is (similarly to the surface case) projections from singular points, tangent spaces to smooth points,
lines, and conics.

Finally, to put the problem on the combinatorial level, we choose toric
Fano varieties as the ``specific''  varieties.
We call the simple projections between toric varieties \emph{toric basic links}.
Thus one can construct, in terms of spanning polytopes the needed  projections. An example of nice subtree
in the projections tree relating Picard rank one Fano varieties (Figure~\ref{figure:del Pezzo tree}) is found in~\cite{CKP13}.
Moreover, one can implement mutations in the picture (see Section~\ref{sec:mutation}), that is deformations from one toric degeneration to another.
In the present  paper we study projections systematically for all Fano threefolds.
In particular, we prove the following.
Given a toric variety $T$ let us call a projection in an anticanonical embedding from tangent space to invariant smooth point, invariant cDV point, or an invariant smooth line F-projection.

\begin{theorem}[Theorem \ref{thm:proj:graph_proj-mut}]
\label{theorem: main 1}
 Given any smooth Fano threefold $X$, there exists a Gorenstein toric degeneration of $X$ that can be obtained by a sequence of mutations and F-projections
 from a toric degeneration of one of $15$ smooth Fano threefolds (see Table~\ref{table:proj:rootFanos}). 
 The directed graph connecting all Fano varieties with very ample anticanonical class via the projections and mutations is presented in Table~\ref{table:proj:choice}.
Each of toric degenerations we use can be equipped with a toric Landau--Ginzburg model.
\end{theorem}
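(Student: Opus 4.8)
The plan is to proceed by explicit construction, working entirely on the level of spanning (Fano) polytopes in the three-dimensional lattice $N$. The strategy mirrors the surface case recalled in the introduction: identify a small number of ``root'' toric Fano threefolds, and then show that every entry on the Mori--Mukai list admits a Gorenstein toric degeneration reachable from one of these roots by F-projections and mutations. First I would fix, for each of the $105$ families, a preferred Gorenstein toric degeneration — a reflexive (more generally, Gorenstein canonical) polytope $P_X\subset N_\QQ$ whose associated toric variety degenerates $X$. For many families this is classical: the toric Fano threefolds with terminal or canonical singularities have been classified, and one reads off the degeneration from known presentations (complete intersections in weighted projective spaces, Mukai's descriptions in low index, etc.). The combinatorial data to record for each $P_X$ is its anticanonical degree $(-K)^3 = 2\Vol{P^*}$ (equivalently the normalized volume of the dual polytope), the Picard rank, and the singularity type of the toric boundary, since these are exactly the invariants tracked along the arrows of the graph.

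Next I would set up the two elementary moves combinatorially. An F-projection from a tangent space to an invariant smooth point, an invariant $cDV$ point, or an invariant smooth line corresponds, on the polytope side, to deleting a vertex $v\in\V{P}$ (the centre of projection) and replacing $P$ by $\conv{\V{P}\setminus\{v\}}$, subject to the smoothness/$cDV$ hypothesis on the star of $v$ in the spanning fan; one checks that the result is again a Gorenstein polytope and that the anticanonical degree drops in a controlled way (by the lattice volume of the vertex figure of $v$). A mutation is the combinatorial mutation of $P$ with respect to a chosen width-one factor, which preserves the Gorenstein/canonical property and the Hilbert series of the dual, hence the deformation class of the general anticanonical section; this is the content of Section~\ref{sec:mutation} and I would invoke it as a black box. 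With both moves expressed as operations on vertex sets, verifying a single arrow $X\to X'$ becomes a finite check: exhibit the centre of projection (or the mutation vector) and confirm the hypotheses and the resulting polytope.

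The bulk of the work is then a finite but large bookkeeping problem: assemble the directed graph of Table~\ref{table:proj:choice} by, for each non-root family, producing an explicit path down to one of the $15$ roots of Table~\ref{table:proj:rootFanos}. I would organize this by descending anticanonical degree and by Picard rank, using F-projections to reduce the degree and mutations to move between toric degenerations of the same smooth Fano (or to reach a more convenient polytope before projecting); the roots are precisely the families where no further admissible projection exists. Along the way one records, for each polytope used, a toric Landau--Ginzburg model — a Laurent polynomial with Newton polytope $P$ whose classical period matches the regularized quantum period of $X$; here I would lean on the fact that toric basic links and mutations act compatibly on Laurent polynomials (mutations literally act by algebraic change of variables preserving the period), so the LG model of a root propagates along every arrow. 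The main obstacle I anticipate is completeness and correctness of this enumeration: there is no a priori structural reason the list of $105$ families should organize into a connected projection graph over only $15$ roots, so for the hardest families — typically higher Picard rank, or low degree with obstructed toric models, or the sporadic $2$--$1$ type families — one must either hunt for a non-obvious Gorenstein toric degeneration (possibly not reflexive) or insert an intermediate mutation to unlock a projection. Handling these exceptional cases, and certifying that every claimed arrow's smoothness/$cDV$ hypothesis genuinely holds, is where the real effort lies; the rest is systematic polytope computation, which I would carry out (and have carried out) with computer assistance and summarize in the tables.
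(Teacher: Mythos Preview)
Your overall strategy---fix a Gorenstein toric degeneration for each family, express F-projections and mutations as explicit operations on spanning polytopes, then carry out a finite computer-assisted enumeration to assemble the directed graph---is exactly the paper's approach. The paper's proof of Theorem~\ref{thm:proj:graph_proj-mut} is precisely this: the algorithm of \S\ref{sec:projections_algorithm} is run over the Kreuzer--Skarke list, and each arrow is certified by exhibiting the explicit pair of reflexive polytopes (by Reflexive ID) in Appendix~\ref{sect:minGraph}.

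However, your combinatorial description of an F-projection is backward, and this propagates into a reversed picture of the whole graph. A projection from a toric centre $Z$ is a \emph{blow-up} of $Z$ followed by the anticanonical model; on the spanning fan this is a \emph{subdivision} of the cone over the corresponding face, hence one \emph{adds} vertices to $P\subset N_\QQ$ rather than deleting one. Moreover, the centre $Z$ is not a vertex of $P$: a torus-fixed point corresponds to a \emph{facet} of $P$ and a torus-invariant curve to an \emph{edge} (see Proposition~\ref{prop:reflexive_facet}, Corollary~\ref{cor:reflexive_projections}, and Table~\ref{table:proj:faceTypes}, which list the facet/edge types and the precise lattice points to be adjoined). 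In the paper's conventions there is an arrow $P\to Q$ exactly when $\V{P}\subset\V{Q}$, so the roots are polytopes that are \emph{minimal} (cannot be obtained by adjoining a vertex to something smaller), i.e.\ the tetrahedron for $\PP^3$ and the other entries of Table~\ref{table:proj:rootFanos}. Your sentence ``the roots are precisely the families where no further admissible projection exists'' therefore inverts source and sink: roots are families admitting no admissible \emph{anti}-projection. Since the rest of your plan already agrees with the correct orientation (degree decreases along projections, roots have high degree), this is a local but genuine error in translating the geometry to polytope operations; once you replace ``delete the vertex $v$'' by ``subdivide the cone over the relevant facet or edge as in Table~\ref{table:proj:faceTypes}'', your enumeration scheme coincides with the paper's.
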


\begin{theorem}[Theorem \ref{hyi}]
\begin{itemize}
\label{theorem: main 2}
 \item[(i)]
For any smooth Fano threefold with very ample anticanonical class there is its Gorenstein toric degeneration such that all these degenerations are connected by sequences of F-projections. The directed graph of such projections 
can be chosen as a union of $15$ trees with roots shown in Table~\ref{table:proj:rootFanos}.
The directed graph connecting all Fano varieties with very ample anticanonical class via the projections and mutations is presented in Figure~\ref{fig:graph:projections:i}.
Each of the toric degenerations can be equipped with a toric Landau--Ginzburg model. 

\item[(ii)]
For any smooth Fano threefold with very ample anticanonical class there is its toric degeneration such that all these degenerations are connected by sequences of projections in the anticanonical embedding with toric centres which are either tangent spaces to smooth points, or
cDV points, or smooth lines, or smooth conics. The directed (sub)graph of such projections
connecting degenerations of all smooth Fano threefolds
can be chosen to have five roots which are: $\PP^3$, $\mathbb{P}(\mathcal{O}_{\mathbb{P}^2}\oplus\mathcal{O}_{\mathbb{P}^2}(2))$,
quadric threefold, $\PP^1\times\PP^1\times \PP^1$, and~$\PP^3$ blown up in a line.
The directed graph connecting all Fano varieties via the projections and mutations is presented in Figure~\ref{fig:graph:projections:ii}.
Each of the toric degenerations can be equipped with a toric Landau--Ginzburg model.
\end{itemize}
\end{theorem}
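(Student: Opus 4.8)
The plan is to reduce the two statements to an explicit finite verification built on the Mori--Mukai classification~\cite{MM81}, run through the families of smooth Fano threefolds whose general member has very ample anticanonical class. The first step is to fix, for every such family, a suitable toric degeneration --- with reflexive spanning polytope in the setting of part~(i). Candidates come from the already known toric degenerations of Fano threefolds: the classical ones in Picard rank one, and in higher Picard rank the toric varieties coming from Minkowski polynomials and from the classification of three-dimensional reflexive polytopes. The delicate point is that the degeneration may not be chosen arbitrarily: the toric variety placed at a vertex must genuinely admit, in its anticanonical embedding, a \emph{torus-invariant} centre of the prescribed type --- tangent space at a smooth point, cDV point, smooth line, or (for part~(ii)) smooth conic --- and projection from that centre must land on the toric variety already attached to the parent vertex. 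So before anything else I would set up the combinatorial dictionary translating each such projection into an operation on spanning polytopes, exactly along the lines of the toric basic links of~\cite{CKP13}, together with the precise condition under which the output polytope is again reflexive, so that the target is again a Gorenstein toric Fano.

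The second ingredient is the mutation theory of Section~\ref{sec:mutation}: a combinatorial mutation of a Fano polytope corresponds to a $\QQ$-Gorenstein deformation, hence two toric varieties related by a mutation are degenerations of the \emph{same} smooth Fano threefold. Mutations are what allow a single combinatorial tree to carry both the geometry and the toric models at once: whenever the most natural toric degeneration of a family does not admit the F-projection from its parent, one replaces it by a mutation-equivalent one that does. With the dictionary and mutations available, the core of the argument is the construction of the graph itself. Processing the families in order of, say, decreasing anticanonical degree, for each one I would exhibit an explicit projection from an already-placed vertex, verify on polytopes that it is an F-projection (for~(i)) or a projection with torus-invariant centre of one of the four allowed types (for~(ii)) and that it lands on the claimed target, and iterate; the families at which the process stalls are the roots, and one checks that they are exactly the $15$ of Table~\ref{table:proj:rootFanos} for~(i) and the five threefolds $\PP^3$, $\PP(\calO_{\PP^2}\oplus\calO_{\PP^2}(2))$, the quadric threefold, $\PP^1\times\PP^1\times\PP^1$, and the blow-up of $\PP^3$ in a line for~(ii). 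Because every centre used is torus-invariant, every projection stays inside the category of toric varieties, so the whole construction lives in the toric world and Figures~\ref{fig:graph:projections:i} and~\ref{fig:graph:projections:ii} are precisely the record of this process.

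Finally, each toric degeneration used must be equipped with a toric Landau--Ginzburg model. For this I would invoke the theory of toric LG models of Fano threefolds: a Gorenstein toric Fano threefold carries the standard toric LG model given by the Laurent polynomial supported on its dual reflexive polytope, and the (possibly non-Gorenstein) degenerations appearing in part~(ii) inherit one either directly or by an algebraic mutation from a Gorenstein neighbour --- alternatively one argues inductively, checking that an F-projection and a mutation act on Laurent polynomials in a way that transports the toric LG property down the tree. The real obstacle throughout is not any single deep theorem but the coherence of the bookkeeping: out of the many toric degenerations of a given Fano threefold one must select one that simultaneously (a) admits an invariant centre of an allowed type, (b) projects onto the degeneration already chosen for its parent, and (c) is mutation-compatible with it --- and this must be arranged consistently for all the families at once. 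Part~(ii) is the harder half: allowing conic centres enlarges the set of moves, which both helps (fewer roots) and hurts (more cases to control), and one must confirm that the graph genuinely collapses to exactly the five stated roots.
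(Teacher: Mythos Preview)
Your plan is correct in spirit and close to what the paper does: this theorem is proved by a finite, computer-assisted verification over the Mori--Mukai list and the Kreuzer--Skarke list of reflexive $3$-polytopes, using the combinatorial dictionary for toric basic links from~\S\ref{sec:minimality} (Proposition~\ref{prop:reflexive_facet}, Corollary~\ref{cor:reflexive_projections}, Lemma~\ref{lemma:curve_degree}) together with the degeneration results of~\S\ref{sec:degen_intro}.

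The methodology differs in a way worth noting. You propose a greedy top-down construction: process families by decreasing degree, for each one exhibit a projection from an already-placed parent, and declare a root whenever the process stalls. The paper instead runs the exhaustive algorithm of~\S\ref{sec:projections_algorithm}: it computes \emph{all} allowed projections between \emph{all} relevant reflexive polytopes (and their inverses, as a consistency check), builds the full projection graph, and only then selects one degeneration per family so as to minimise the number of roots and connected components. The exhaustive approach is what lets the paper assert that $15$ (respectively $5$) roots is optimal and record exactly which degeneration choices are forced --- in particular, that for part~(ii) the polytope choice for four varieties ($B_4$, $\XX{2}{22}$, $\XX{2}{26}$, $\XX{3}{12}$, plus $\XX{4}{12}$) must differ from the choice in part~(i) to avoid a sixth root $\XX{3}{28}$. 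A greedy pass could in principle reach the same endpoint, but you would need backtracking to certify minimality, and the paper's remark makes clear that the choices are delicate.

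Two small corrections. First, part~(ii) also uses Gorenstein (reflexive) degenerations throughout --- see Table~\ref{table:proj:choice} --- so your speculation about possibly non-Gorenstein toric models is unnecessary. Second, for the toric Landau--Ginzburg claim the paper does not argue inductively along the tree; it appeals directly to the existence results assembled in~\S\ref{sec:toric_LG_models} (the period condition from~\cite{CCGGK13}, the Calabi--Yau condition from~\cite{Prz17}, and the toric condition via Theorem~\ref{theorem: toric LG's}), which apply uniformly to every degeneration in the table.
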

(Note that $\PP^3$ blown up in a line is not a projection of $\PP^3$ since a line in anticanonical embedding has degree $4$.)

As the theorems suggest, the toric degenerations providing basic links correspond to toric Landau--Ginzburg models
(that is Laurent polynomials related to toric degenerations and representing mirrors, see below), and the theorems shows that 
two Fano varieties are related by a toric basic link if their toric Landau--Ginzburg models
are closely related too.

The idea of presenting Landau--Ginzburg models dual to Fano varieties or dual to varieties close to Fano
as Laurent polynomials apparently is going back to Givental.
In~\cite{Gi97} he suggested a Landau--Ginzburg model for a smooth toric variety as a complex torus with a complex-valued function (superpotential)
represented by a Laurent polynomial with support at a fan polytope of the toric variety.
His construction was generalized to varieties admitting nice toric degenerations;
in this case one associate the Laurent polynomial with the fan of the toric degeneration of the Fano variety.
This idea is going back to Batyrev--Borisov's approach to mirror duality for toric varieties
as duality of their polytopes (see~\cite{BB96}) and good deformational behavior of Gromov--Witten
invariants, see~\cite{Bat04} and references therein.
In this spirit Eguchi--Hori--Xiong for Grassmannians (see~\cite{EGH97}) and Batyrev--Ciocan-Fontanine--Kim--van Straten
for partial flag varieties (see~\cite{BCFKS97} and~\cite{BCFKS98})
constructed Laurent presentations for Landau--Ginzburg for Grassmannians.

The crucial part of the duality between Fano varieties and Landau--Ginzburg models in this approach
is an identification of (a part of) Gromov--Witten theory for the Fano varieties
and periods of the dual one-dimensional family. If the total space of the family is a complex torus, then
the Landau--Ginzburg model, as we mentioned above, is (in some basis) represented by a Laurent polynomial.
In this case the main period of the family is a generating series for constant terms of powers of Laurent polynomials,
see Section~\ref{sec:toric_LG_models} again. In~\cite{Gi97} Givental proved the coincidence of the two series for smooth
toric varieties; the same was done in~\cite{BCFKS97} and~\cite{BCFKS98} for Grassmannians and partial flag varieties.

In~\cite{Gi97} Givental suggested an approach to constructing Landau--Ginzburg model for (almost) Fano complete
intersection in toric varieties. This approach was generalized to complete intersections in Grassmannians and (partial)
flag varieties in~\cite{BCFKS97} and~\cite{BCFKS98}, see also~\cite{BCFK03}.
The output of Givental's construction is not a complex torus with a function but a complete intersection in a torus
with a function. These Landau--Ginzburg models satisfy the Gromov--Witten--period condition via Quantum Lefshetz Theorem,
which enables one to pass from Gromov--Whitten theory of a variety to Gromov--Whitten theory of an ample (Fano) hypersurface therein.

The natural idea is to realize birationally  Landau--Ginzburg models as Laurent polynomials.
This was done in~\cite{Prz07},~\cite{Prz09},~\cite{ILP11},~\cite{Prz17},~\cite{CCGK13} for smooth Fano threefolds and complete
intersections in projective spaces.

In the case of  Fano  complete intersections (Picard rank one)  it was proven that the resulting Laurent polynomials
are related to toric degenerations.
This led to the idea that Laudau--Ginzburg models presented as Laurent polynomials should be assigned with toric
degenerations  used in the generalization of Givental's approach.
Another concept is the Compactification Principle which says that correctly chosen Laurent Landau--Ginzburg
model admits a fiberwise compactification to a family of compact Calabi--Yau varieties such that
this family satisfy (an algebraic part of) Homological Mirror Symmetry. These two concepts, together
with the initial concept of Gromov--Witten--period coincidence form  the central ideas  on which  the present  paper is based.
The above ideas were initiated by  Golyshev, who has suggested a program of finding (toric) Landau--Ginzburg model
by guessing Laurent polynomials having prescribed constant terms and, hence, periods.
The first results were obtained for smooth rank one Fano threefolds in~\cite{Prz07} and~\cite{Prz09};
later in~\cite{ILP11} the proof of their toricity was completed.
These results clarified the connection with toric degenerations to (possibly very singular) toric varieties.
The specific simple form of found Laurent polynomials leads to binomial principle claimed in~\cite{Prz09}.
This principle states that coefficients on facets of the Newton polytope of the Laurent polynomial correspond to
binomial coefficients of a power of a sum of independent variables. Surprisingly this principle
covers most of (but not all) smooth Fano threefolds: most of them have toric degenerations with cDV singularities,
which means that integral points of the Newton polytope for Laurent polynomial are the origin and points lying on edges.
This gives an algorithm for finding Landau--Ginzburg models.

Binomial principle was generalized to Minkowski principle in~\cite{CCGGK13}. It relates coefficients
of the Laurent polynomial with Minkowski decomposition(s) of facets of its Newton polytope
to particular elementary summands. Moreover, in loc. cit. all canonical polytopes
that are Newton polytopes of Minkowski Laurent polynomials were found and all $J$-series
of smooth Fano threefolds were computed. This gives, for any smooth Fano threefold, a Laurent polynomial (which is not unique)
satisfying period condition. In this paper we establish toric degeneration condition as well, see Section~\ref{sec:degen_intro}.

\begin{theorem}
\label{theorem:degenerations}
Let $T$ be a Gorenstein toric variety appeared in Theorems~\ref{theorem: main 1} and~\ref{theorem: main 2}.
Then $T$ is a toric degeneration of corresponding Fano threefold.
\end{theorem}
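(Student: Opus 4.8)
The proof is a finite verification carried out over the toric varieties that occur in the construction, so I would begin by setting it up as such. Only finitely many Gorenstein toric threefolds $T$ appear in Tables~\ref{table:proj:rootFanos} and~\ref{table:proj:choice} and in Figures~\ref{fig:graph:projections:i} and~\ref{fig:graph:projections:ii}, and for each of them the tables already record the smooth Fano threefold $X$ to which it is supposed to specialise. Each such $T$ is a Gorenstein toric Fano, so its fan polytope is reflexive and $T$ has at worst canonical Gorenstein singularities; the anticanonical degree $(-K_T)^3$, the Picard rank $\rho(T)$, and the Hodge and Betti numbers of a crepant resolution are all computable directly from the polytope. The plan is: for each $T$, produce a flat projective family over a curve with special fibre $T$ and smooth general fibre, and then identify that general fibre with the prescribed $X$ rather than with some other Fano threefold sharing its numerical invariants.

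The main engine is the terminal case. If $T$ has only terminal singularities, then by Reid's classification it is a Fano threefold with cDV singularities, hence smoothable by Namikawa's smoothing theorem for Gorenstein terminal Fano threefolds; since $(-K)^3$ is constant in a flat family and the topological type of the smoothing is controlled by Namikawa's deformation analysis, comparison with the Mori--Mukai classification~\cite{MM81}, together with the extra geometric data carried along the relevant branch of the projection/mutation graph (the degree of the line or conic used as a projection centre, the behaviour of that projection, and so on), pins the smooth fibre down to $X$. Many of these individual degenerations are in any case already in the literature --- in Galkin's list of small toric degenerations of Fano threefolds, in the Hilbert-scheme computations of Christophersen--Ilten, and in the Minkowski-polynomial tables of~\cite{CCGGK13} (see also~\cite{CCGK13},~\cite{ILP11},~\cite{Prz17}, where the Picard-rank-one cases are settled) --- so for those nodes it suffices to quote the known smoothing.

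I would then propagate the statement along the two types of edge. A mutation $T\rightsquigarrow T'$ of the associated Laurent polynomials yields, by Ilten's construction, a flat family over $\PP^1$ having $T$ and $T'$ among its fibres; since deformations of Gorenstein terminal Fano threefolds are unobstructed and the general fibre of that family degenerates to each of $T$ and $T'$, the two toric varieties lie in one deformation component and therefore have the same smoothing. For an F-projection edge, $T$ is replaced by the toric variety whose reflexive polytope arises from the prescribed combinatorial operation (roughly, deleting the vertex or edge that serves as the centre of projection); matching this with the projection of smooth Fano threefolds used in Theorem~\ref{theorem: main 2}(ii), and with the fact that a one-parameter degeneration of $X$ induces one of its projection, carries the degeneration statement down each of the fifteen trees from its root. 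The roots that are themselves smooth toric --- for instance $\PP^3$, $\PP^1\times\PP^1\times\PP^1$, $\PP(\calO_{\PP^2}\oplus\calO_{\PP^2}(2))$, and the blow-up of $\PP^3$ in a line --- are their own trivial degenerations, and the quadric threefold degenerates to the Gorenstein toric quadric cone, the projective cone over $\PP^1\times\PP^1$, with its single node.

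The hard part will be the nodes $T$ whose singularities are canonical but not terminal, since there Namikawa's theorem does not apply and the deformation functor can be obstructed. For these I would argue case by case: either realise $T$ as a complete intersection, or a subvariety of small codimension, in an ambient Gorenstein toric variety or weighted projective space whose general member is a smooth Fano threefold and check flatness of the resulting family directly; or first pass to a crepant partial resolution with only terminal Gorenstein singularities, smooth it by the previous step, and verify that the total space of the family is flat over the base with the desired special fibre. Checking that this can be carried through for every non-terminal node that actually occurs in the tables, and that in each case the smooth general fibre is exactly the Fano threefold demanded by the graph rather than a numerically indistinguishable competitor in the Mori--Mukai list, is the principal obstacle; the reasoning is finite but requires working through the tables node by node.
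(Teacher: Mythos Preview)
Your proposal diverges substantially from the paper's argument, and the divergence contains a real gap.

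The step where you propagate the degeneration statement along F-projection edges is not justified. An F-projection edge connects a toric degeneration $T$ of one Fano $X$ to a toric variety $T'$ that is supposed to be a degeneration of a \emph{different} Fano $X'$. You assert that ``a one-parameter degeneration of $X$ induces one of its projection'', but this is exactly what needs proof: one must show that the toric projection centre on $T$ (an invariant point or curve) deforms to a centre on $X$ of the correct type, that projecting the total space of the family remains flat, and that the resulting general fibre is the intended $X'$ rather than something else. None of this is automatic, and the paper does not claim it. F-projections organise the \emph{graph}, but they are not used to transport the degeneration property from node to node.

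The paper's proof is entirely different and avoids this issue. For each smooth Fano $X$ it constructs at least one explicit toric degeneration $X\rightsquigarrow X'$ by one of several direct methods (Galkin's small degenerations, Christophersen--Ilten's low-degree classification, triangulations of the moment polytope for rank-one index-one cases, products with del~Pezzo surfaces, and degenerating the defining equations when $X$ is a complete intersection in a toric variety as in~\cite{CCGK13}). It then computes, for every Gorenstein toric Fano threefold, the tangent space dimension $h^0(X',\calN_{X'/\PP^n})$ of the corresponding Hilbert scheme point, and compares it with the dimension of the component of $\calH_X$ containing the smooth $X$. Equality forces $[X']$ to be a smooth point of $\calH_X$, hence to lie on a unique component, namely the one containing $X$; this simultaneously certifies the degeneration and that the deformation space of $X'$ is smooth. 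Only then are mutations invoked, and only to move among toric degenerations of the \emph{same} $X$ (via Ilten's family and the smoothness of the Hilbert point just established). Your Namikawa-based smoothing argument for terminal $T$, followed by identification of the general fibre, is replaceable by this Hilbert-scheme comparison, which handles terminal and non-terminal cases uniformly and sidesteps the identification problem you flag as ``the principal obstacle''.
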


\begin{remark}
The assertion of Theorem~\ref{theorem:degenerations} holds for much larger class of Gorenstein toric varieties, see Appendix~\ref{sec:appendix}.
\end{remark}

As we have mentioned, Laurent polynomial, as a mirror dual to Fano, is not unique.
However it's Calabi--Yau compactification is unique. Indeed under mild natural condition, holds for rank one Fano threefolds,
see~\cite{ILP11} and~\cite{DHKLP}. This means that Landau--Ginzburg models are birational over the base field $\CC(x)$.
In other words, corresponding Laurent polynomials differ by mutations. It is proven in~\cite{ACGK12} that all
Laurent polynomials with support on reflexive polytopes that produce the same period differ by (a sequence of) mutations.
So they have a common log Calabi--Yau compactification. This suggests the last, at the moment, and the strongest
concept of assigning a Laurent Landau--Ginzburg model to a reflexive polytope, the maximal mutational principle.
It is described in details in Section~\ref{sec:mutation}. 

The above findings can be interpreted categorically.
We propose the following.

\begin{conjecture}
The moduli spaces of Landau--Ginzburg models (defined in \cite{DKK13}) for directed graphs of
Fano varieties from Theorem
~\ref{theorem: main 2} are contained in each other
with the top Landau--Ginzburg models contained in the ones obtained by projections.
\end{conjecture}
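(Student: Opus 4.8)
The plan is to reduce the conjecture to the two elementary moves that generate the directed graph of Theorem~\ref{theorem: main 2} --- mutations and F-projections (together with the conic projections appearing in part~(ii)) --- and to show that each move induces an inclusion of the associated moduli spaces of~\cite{DKK13} in the direction the conjecture predicts. For mutations half of the statement is already available: by~\cite{ACGK12} two Laurent polynomials related by a mutation have the same period sequence and a common log Calabi--Yau compactification, so they determine the same family of compact Calabi--Yau hypersurfaces; passing to the moduli stack of~\cite{DKK13} this identification ought to be the identity on the relevant moduli space, or at worst a canonical isomorphism. Hence the crux is a single basic link $T \to T'$, and one must relate the moduli of the toric Landau--Ginzburg model $f_T$ to that of $f_{T'}$.

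First I would make the combinatorics precise. A toric basic link is an explicit operation on the spanning/Newton polytopes --- deletion of a face (a vertex for a smooth-point or cDV centre, an edge for a smooth line or conic) --- and it induces an explicit relation between the Laurent polynomials $f_T$ and $f_{T'}$: one is obtained from the other by specialising a prescribed subset of the monomial coefficients. On the mirror side this should manifest as a closed immersion between the two moduli spaces, and the conjecture predicts that it is the root (``top'') Landau--Ginzburg model whose moduli space immerses into that of the projected variety. To verify this I would realise both Landau--Ginzburg models as one-parameter families of hypersurfaces in the toric varieties attached to their Newton polytopes, use the correspondence of Newton polytopes underlying the basic link to relate the generic fibre of $f_T$ to that of $f_{T'}$, and transport the Picard--Fuchs data across this relation, invoking the Gromov--Witten--period coincidence (known for all the threefolds in play through~\cite{Prz07,Prz09,ILP11,CCGGK13}) to isolate the relevant sub-variation of Hodge structure and hence the induced map on the~\cite{DKK13} moduli stacks.

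Next I would assemble these local statements along each of the $15$ rooted trees of Theorem~\ref{theorem: main 2}. A traversal from the root downwards yields, for every vertex $X$, a chain of inclusions running from the root to $X$; one then checks that consecutive inclusions compose --- that is, functoriality of the~\cite{DKK13} construction under the elementary moves --- and that a mutation inserted between two F-projections does not disturb the chain. Here the maximal mutational principle of Section~\ref{sec:mutation} does essential work: it guarantees that the particular Laurent representatives singled out in Theorem~\ref{theorem: main 2} are the ``correct'' ones, for which the Calabi--Yau compactifications, and therefore the moduli spaces, are canonically defined and independent of auxiliary choices, so that the chains built along different paths agree.

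The hard part will be precisely this functoriality, together with pinning down what ``contained'' should mean. The moduli spaces of~\cite{DKK13} are built from Hodge-theoretic and categorical data attached to the Calabi--Yau compactification and its degeneration at infinity, and it is not evident that an F-projection --- a birational map that typically contracts a divisor on the Fano side --- induces a genuine \emph{morphism} of these moduli spaces rather than merely a correspondence; what must be controlled is how the extra boundary components of the compactified fibre, coming from the contracted locus, enter the~\cite{DKK13} construction. I expect one is forced either to enlarge the moduli problem slightly, allowing the boundary strata to vary, or to restrict to the locus on which the link is crepant, which is exactly the locus on which the paper already controls the singularities of $T$. Showing that this locus is nonempty and that the inclusions are closed immersions onto it is, I think, the technical heart of the argument --- and presumably the reason the statement is recorded here as a conjecture rather than a theorem.
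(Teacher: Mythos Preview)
The statement you are addressing is a \emph{conjecture}, not a theorem, and the paper offers no proof of it whatsoever. It is introduced with the phrase ``We propose the following'' and is presented as a speculative categorical interpretation of the combinatorial results of Theorems~\ref{theorem: main 1} and~\ref{theorem: main 2}; there is no argument in the paper to compare your proposal against.

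Your write-up is therefore not a proof attempt in the usual sense but a research plan, and you yourself recognise this in the final paragraph. As a plan it is sensible: reducing to the two elementary moves (mutation and basic link) is the natural strategy, and you correctly identify the core difficulty --- that an F-projection is a genuinely birational, non-crepant operation on the Fano side, and it is far from clear that it induces a morphism (let alone a closed immersion) of the moduli spaces of~\cite{DKK13}, rather than some looser correspondence. That obstruction is precisely why the paper records the statement as a conjecture. Until one has a precise definition of the relevant moduli spaces in this generality and a functoriality statement for basic links, no amount of bookkeeping along the trees of Theorem~\ref{theorem: main 2} will close the gap.
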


In such a way the behaviour of Landau--Ginzburg models for three-dimensional Fano varieties
is very similar to the behaviour of Landau--Ginzburg models of
del Pezzo surfaces.

We lift this conjecture to further categorical levels.
As a consequence of the connection of curve complexes and stability
conditions it was noticed in \cite{DK} that stability conditions
should behave well in families. Later on, the following theorem was
proven by Haiden, Katzarkov, Kontsevich, and Pandit in \cite{HKKP}.
Below we  use the definition of stability conditions given by
Bridgeland.  We give the most general version of the statement.
After that we will explain the connection with our situation.
In what follows we give a categorical description of a family of
hyperplane sections.  We use the language of comonads.
Here the category $\mcc_{\text{special}}$ is the analogue of a singular hyperplane
section and the category $\mcc_{\text{general}}$ is the analogue of a general section. The category $\mcc_0$
is the global family.

\begin{theorem}
Consider the following data.
\begin{enumerate}
\item A category   $\mcc_{\text{special}}$ which   is an $( \infty, 1
)$-category.

\item A stability condition on $\mcc_{\text{special}}$.

\item   A comonad $T$ on $\mcc_{\text{special}}$ such that $Cone(T \to
Id) =[2]$.

\end{enumerate}


Let $\mcc_0$ be a category of comodules corresponding to $\mcc_{\text{special}}$ and $T$.
There is a functor $\mcc_{\text{special}} \to\mcc_0$.
We  define $\mcc_{\text{gen}}=  \mcc_0 / \mcc_{\text{special}} $.

In the situation above there exists a stability condition on
$\mcc_{\text{gen}}$
such that its  central charge and its phase are lift from
a central charge and a phase on   $\mcc_{\text{special}} $.

\end{theorem}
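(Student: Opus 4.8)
Write $(Z_{\text{special}},\mathcal{P}_{\text{special}})$ for the given stability condition. My plan is to manufacture from the comonad a ``nearby fibre'' functor $\Phi\colon\mcc_{\text{gen}}\to\mcc_{\text{special}}$ and to build the stability condition on $\mcc_{\text{gen}}$ as the pullback of $(Z_{\text{special}},\mathcal{P}_{\text{special}})$ along it. First I would check that the comodule formalism together with the hypothesis $\operatorname{Cone}(T\to\operatorname{Id})=[2]$ makes the functor $\iota\colon\mcc_{\text{special}}\to\mcc_0$ of the statement fully faithful and equipped with both a left and a right adjoint: the condition on the cone of the counit makes $T$ invertible up to the shift $[2]$, which is exactly what lets one dualise and produce the adjunctions. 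Then $\mcc_{\text{special}}$ is admissible in $\mcc_0$, the quotient $\mcc_{\text{gen}}=\mcc_0/\mcc_{\text{special}}$ is identified with an orthogonal of $\mcc_{\text{special}}$ inside $\mcc_0$, and one has a recollement $\mcc_{\text{special}}\rightleftarrows\mcc_0\rightleftarrows\mcc_{\text{gen}}$; $\Phi$ is the associated gluing functor, obtained by composing a fully faithful adjoint of the quotient functor with an adjoint of $\iota$. The key structural consequence of the $[2]$-hypothesis that I would isolate is that $\Phi$ is \emph{conservative} and of bounded cohomological amplitude, the defect of $t$-exactness sitting in a single ``monodromy'' direction; concretely, there should be a long exact sequence comparing $\bigoplus_{n}\operatorname{Hom}_{\mcc_{\text{gen}}}(E,F[n])$ with $\bigoplus_{n}\operatorname{Hom}_{\mcc_{\text{special}}}(\Phi E,\Phi F[n])$ whose error terms are shifted by exactly $2$.

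Granting this, I would transport the data. On Grothendieck groups the recollement gives a splitting $K(\mcc_0)\cong K(\mcc_{\text{special}})\oplus K(\mcc_{\text{gen}})$ and an induced map $[\Phi]\colon K(\mcc_{\text{gen}})\to K(\mcc_{\text{special}})$; I would simply declare
\[
Z_{\text{gen}}:=Z_{\text{special}}\circ[\Phi],
\]
valued in the same lattice as $Z_{\text{special}}$, so that the non-degeneracy of $Z_{\text{gen}}$ on semistable objects and the support property are inherited from those of $(Z_{\text{special}},\mathcal{P}_{\text{special}})$ via $\Phi$. For the slicing, let $\mathcal{A}_{\text{special}}=\mcc_{\text{special}}^{\le 0}\cap\mcc_{\text{special}}^{\ge 0}$ be the heart of $\mathcal{P}_{\text{special}}$ and take
\[
\mcc_{\text{gen}}^{\le 0}:=\{E\mid\Phi(E)\in\mcc_{\text{special}}^{\le 1}\},
\qquad
\mcc_{\text{gen}}^{\ge 0}:=\{E\mid\Phi(E)\in\mcc_{\text{special}}^{\ge 0}\}
\]
(the shift by $1$ absorbing the monodromy direction) as a bounded $t$-structure on $\mcc_{\text{gen}}$; that this is a $t$-structure should follow from conservativity and the $\operatorname{Hom}$-bound above. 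I would then call $E\in\mcc_{\text{gen}}$ semistable of phase $\varphi$ exactly when $\Phi(E)$ is an extension of $\mathcal{P}_{\text{special}}$-semistable objects of phases $\varphi$ and $\varphi+1$ in the two adjacent cohomological slots, and take the Harder--Narasimhan filtration of $E$ to be the lift through $\Phi$ of that of $\Phi(E)$. By construction the central charge and the phase function of the resulting pair are the composites with $\Phi$ of the central charge and phase of $(Z_{\text{special}},\mathcal{P}_{\text{special}})$ --- which is precisely the claimed lifting property.

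The hard part, I expect, is showing that $\Phi$ \emph{reflects} Harder--Narasimhan filtrations even though it fails to be $t$-exact, i.e.\ that the HN filtration of $\Phi(E)$ lifts --- uniquely and compatibly with phases --- to one of $E$. I would attempt this by obstruction theory, inducting on the length of the filtration downstairs; the defect of $\Phi$ enters only through the degree-$2$ error terms of the comparison sequence above, and these are forced to vanish by the phase inequalities built into the slicing. This is exactly where the hypothesis $\operatorname{Cone}(T\to\operatorname{Id})=[2]$ is indispensable, and why it is stated with the shift $[2]$ rather than some other shift. Everything else is $(\infty,1)$-categorical bookkeeping: making ``heart'', ``HN filtration'' and ``semistable object'' homotopy-coherent in the stable $\infty$-setting, and verifying that the recollement, the adjunctions and $\Phi$ are genuinely functorial and mutually compatible; this is routine but lengthy, and is presumably the reason the statement is phrased through comonads. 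In brief, the argument is the comonad-and-recollement counterpart of the standard recipe for gluing a stability condition along a semiorthogonal decomposition.
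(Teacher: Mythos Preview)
The paper does not contain a proof of this theorem. It is stated in the introduction as a result of Haiden, Katzarkov, Kontsevich, and Pandit, with attribution to the reference \cite{HKKP}; no argument, sketch, or indication of method is given in the present paper. So there is nothing here against which to compare your proposal.

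That said, a brief comment on your sketch on its own terms: the overall architecture---recollement from the comodule category, a gluing functor $\Phi$, and transport of the central charge and slicing along $\Phi$---is a plausible shape for such an argument, and your identification of the crux (that $\Phi$ should reflect Harder--Narasimhan filtrations despite a controlled failure of $t$-exactness) is reasonable. However, several of the structural claims you ``grant'' are not obviously consequences of the hypotheses as stated: that $\iota$ is fully faithful with both adjoints, that $\Phi$ is conservative with bounded amplitude, and that the degree-$2$ obstruction terms vanish by phase inequalities are all assertions that would need genuine proofs, not just heuristics from the $\operatorname{Cone}(T\to\operatorname{Id})=[2]$ condition. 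In particular, your proposed $t$-structure on $\mcc_{\text{gen}}$ via $\Phi$-preimages would require a careful argument that the axioms hold; pullback $t$-structures along non-exact functors are delicate. If you want to pursue this, you should consult \cite{HKKP} directly to see what machinery they actually use.
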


Applied to our situation the above theorem suggests the following.

\begin{enumerate}
\item Stability conditions of Fano varieties can be obtained from
stability conditions of (singular) toric varieties.
Indeed the  last ones have exceptional collections and moduli spaces
of stability conditions are  easier to understand.

\item Stability conditions of Calabi--Yau varieties can be obtained from stability
conditions of Fano  manifolds via Tyurin degenerations.
\end{enumerate}

Combining these facts with the  finding of the present   paper suggest the following.

\begin{conjecture}
The moduli spaces of stability conditions (defined by Bridgeland) for directed graphs of Fano varieties in Theorem
~\ref{theorem: main 2} are contained in each other
with the top moduli spaces of stability conditions  contained in the
ones obtained by projections.
\end{conjecture}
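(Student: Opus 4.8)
The plan is to reduce the global nesting to one edge of the directed graph of Theorem~\ref{theorem: main 2} at a time and then feed the resulting local statement into the family stability theorem of Haiden--Katzarkov--Kontsevich--Pandit quoted above (see~\cite{HKKP}). It suffices to prove: whenever $Y$ is obtained from $X$ by a single F-projection, a single mutation, or (for part~(ii) of Theorem~\ref{theorem: main 2}) a projection with a toric centre of one of the listed types, the moduli space of Bridgeland stability conditions on the mirror category attached to $X$ embeds into the one attached to $Y$ as a locally closed subspace, compatibly with central charges and phases. Composing these embeddings along a path issuing from a root then gives the nesting, with the root occupying the deepest stratum; compatibility of the fifteen (resp.\ five) trees is automatic because the roots of Table~\ref{table:proj:rootFanos} are pairwise incomparable in the graph. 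So fix one such transition $X\leadsto Y$.

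The first step is to manufacture the categorical data to which the cited theorem applies. On the mirror side, the toric Landau--Ginzburg models attached by the paper to the chosen toric degenerations admit, by the Compactification Principle of Section~\ref{sec:toric_LG_models}, fibrewise Calabi--Yau compactifications whose generic fibre is a compact K3 surface, whose Fukaya category is $2$-Calabi--Yau. A basic link in the anticanonical embedding should translate, on the compactified mirror family, into the passage from a singular section to a general one: take $\mcc_{\text{special}}$ to be the (relative) Fukaya category modelling the end of the family attached to $X$, let $T$ be the comonad implementing the transition to the global family $\mcc_0$, and let $\mcc_{\text{gen}}=\mcc_0/\mcc_{\text{special}}$ be the Fukaya category attached to $Y$. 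The decisive hypothesis is the identity $\operatorname{Cone}(T\to\operatorname{Id})=[2]$; one expects this to reflect the $2$-Calabi--Yau Serre duality of the K3 fibres, the comonad $T$ being built from spherical twists about the vanishing cycles of the K3 pencil. Verifying this identity --- for \emph{every} link appearing in Table~\ref{table:proj:choice}, and in particular for the conic projections of part~(ii), where the relation between $\db{X}$ and $\db{Y}$ is the least transparent --- is the main obstacle.

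Granting the comonadic data, the theorem above produces, from each stability condition on $\mcc_{\text{special}}$, a stability condition on $\mcc_{\text{gen}}$ with central charge and phase lifted from those on $\mcc_{\text{special}}$. To upgrade this pointwise lift to an embedding of moduli spaces I would: (i) show the lift is holomorphic by tracking it through Bridgeland's deformation theorem applied to the recollement-type data $\mcc_{\text{special}}\to\mcc_0\to\mcc_{\text{gen}}$; (ii) prove injectivity, using that after the lift the central charge on $\mcc_{\text{gen}}$ determines and is determined by the one on $\mcc_{\text{special}}$; and (iii) identify the image with the locus where the phases of the two glued hearts are aligned, which is exactly the ``contained in'' relation of the conjecture, with the source $X$ (hence the root) in the deepest stratum. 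As a cross-check one may run the argument through the preceding conjecture on moduli of Landau--Ginzburg models: that nesting should follow from the maximal mutational principle of Section~\ref{sec:mutation} together with the fact that Laurent polynomials related by the links of this paper share a log Calabi--Yau compactification (cf.~\cite{ACGK12},~\cite{ILP11}), while homological mirror symmetry in the form of~\cite{DKK13} should identify each such moduli space with a chart of the corresponding moduli of stability conditions, transporting the inclusions.

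The hard part is therefore twofold. First, the hypotheses of the family stability theorem --- existence of the comonad $T$ with $\operatorname{Cone}(T\to\operatorname{Id})=[2]$ --- must be checked uniformly across all smooth Fano threefolds and all edge types, and at present this rests on homological mirror symmetry statements available only in scattered cases. Second, promoting the one-point lift to an open (locally closed) holomorphic embedding of the full Bridgeland moduli spaces is a genuinely global statement about $\operatorname{Stab}$, not a formal consequence of the categorical input, and would require controlling the wall-and-chamber structure along the family. For both reasons the assertion is left as a conjecture.
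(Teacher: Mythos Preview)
The statement you are attempting to prove is a \emph{conjecture} in the paper, not a theorem: the paper offers no proof at all. What the paper does provide is motivation, namely the Haiden--Katzarkov--Kontsevich--Pandit theorem quoted just before the conjecture, together with the informal remark that ``applied to our situation the above theorem suggests'' the two enumerated observations about stability conditions. That is the full extent of the paper's treatment.

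Your proposal is therefore not comparable to any argument in the paper; rather, it is an attempt to flesh out the heuristic the authors gesture at. As a sketch of a strategy it is reasonable and honest: you correctly identify the HKKP theorem as the intended engine, you isolate the edge-by-edge reduction, and you explicitly name the two genuine obstructions --- verifying the comonadic hypothesis $\operatorname{Cone}(T\to\operatorname{Id})=[2]$ uniformly across all links, and upgrading the pointwise lift to a global holomorphic embedding of $\operatorname{Stab}$. Your final sentence, that ``the assertion is left as a conjecture,'' is exactly the status the paper assigns it. There is no gap to point out beyond those you already flag, and no alternative route in the paper to contrast with yours.
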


&\raisebox{-2px}{$(2,1,0)$}\Tstrut\\
\bottomrule
 \end{tabular}\vspace{1em}
\caption{The possible choices of faces to be blown-up, and corresponding points to be added, when considering projections between Gorenstein toric Fano threefolds.}\label{table:proj:faceTypes}
\end{table}

Of course case~\hyperlink{fig:proj:faceTypes_1}{$1$} in Table~\ref{table:proj:faceTypes} is a specialisations of case~\hyperlink{fig:proj:faceTypes_2}{$2$}. However, since the point added is what is important, we list it separately. When $a=1$, $b=0$, or when $a=0$, $b=2$ in case~\hyperlink{fig:proj:faceTypes_2}{$2$}, the coordinates of the two points to be added coincide, so we add the single point $(2,1,1)$.

Since the choice of the basic links we are using relies on distinguishing curves of small degrees on a given toric variety, it is necessary to be able to easily calculate the anticanonical degree of a given curve. This can be done as follows:

\begin{lemma}\label{lemma:curve_degree}
Let $T$ be a three-dimensional $\QQ$-factorial projective toric variety with simplicial fan $\Delta$. Let $c_1$ and $c_2$ be rays in $\Delta^{(1)}$ generating a two-dimensional cone in $\Delta^{(2)}$, with corresponding torus-invariant curve $C$. Here $\Delta^{(n)}$ denotes the set of $n$-dimensional cones in the fan $\Delta$.  Let $a_1$ and $a_2$ be rays such that $F_i=\scone{c_1,c_2,a_i}\in\Delta^{(3)}$, $F_1\neq F_2$, and let $e_1,\ldots, e_r$ denote the remaining rays of $\Delta$, so that $\Delta^{(1)}=\{c_1,c_2,a_1,a_2,e_1,\ldots,e_r\}$. Let $\Vol{F_i}=\abs{\Gamma:N}$ be the index of the sublattice $\Gamma_i$ in $N$ generated by $c_1,c_2,a_i$, where by a standard abuse of notation we confuse a ray with its primitive lattice generator in $N$; equivalently, $\Vol{F_i}$ is equal to the lattice-normalised volume of the tetrahedron $\sconv{\orig,c_1,c_2,a_i}$.

Denote the boundary divisor corresponding to $c_i$, $a_i$, or $e_i$ by $C_i$, $A_i$, or $E_i$ respectively. Let $L_1$ and $L_2$ be linear forms such that $L_1$ vanishes on $c_2$ but not on $c_1$, and $L_2$ vanishes on $c_1$ but not on $c_2$. Then the anticanonical degree of $C$ is equal to
\[
 \deg{C}=\left(1-\frac{L_1(a_1)}{L_1(c_1)}-\frac{L_2(a_1)}{L_2(c_2)}\right)\frac{1}{\Vol{F_1}}+\left(1-\frac{L_1(a_2)}{L_1(c_1)}-\frac{L_2(a_2)}{L_2(c_2)}\right)\frac{1}{\Vol{F_2}}.
\]
\end{lemma}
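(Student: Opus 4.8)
The plan is to compute the intersection number $(-K_T) \cdot C$ directly from the combinatorial data of the fan $\Delta$, using the standard toric formula for intersecting a torus-invariant curve with a divisor. First I would recall that $-K_T = \sum_{\rho} D_\rho$, the sum over all rays $\rho \in \Delta^{(1)}$ of the corresponding boundary divisors, so that
\[
\deg C = (-K_T)\cdot C = \sum_{\rho\in\Delta^{(1)}} D_\rho \cdot C = C_1\cdot C + C_2\cdot C + A_1\cdot C + A_2\cdot C + \sum_{i=1}^r E_i\cdot C,
\]
and that, since $C$ is the curve corresponding to the wall $\sigma = \scone{c_1,c_2}$ separating the maximal cones $F_1$ and $F_2$, any ray $\rho$ not among $c_1,c_2,a_1,a_2$ has $D_\rho\cdot C = 0$ (its divisor is disjoint from $C$). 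So the sum collapses to the four contributions from $c_1,c_2,a_1,a_2$.

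Next I would invoke the wall-crossing relation. On the $\QQ$-factorial simplicial toric variety $T$, the primitive generators satisfy a relation of the form
\[
m_1\, c_1 + m_2\, c_2 + a_1 + a_2 = 0 \quad\text{(after clearing denominators appropriately)},
\]
or, working rationally, $a_1 + a_2 = -(\lambda_1 c_1 + \lambda_2 c_2)$ for suitable rationals $\lambda_1,\lambda_2$ determined by writing $a_1+a_2$ in terms of the basis $c_1,c_2$ of the plane they all lie over modulo $\sigma$ --- more precisely one uses the linear forms $L_1,L_2$ of the statement: since $L_1$ vanishes on $c_2$ and $L_2$ on $c_1$, one reads off $\lambda_1 = L_1(a_1+a_2)/L_1(c_1)$ and $\lambda_2 = L_2(a_1+a_2)/L_2(c_2)$. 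The intersection numbers are then computed by the formula $D_{a_i}\cdot C = 1/\Vol{F_i}$ (since $a_i$ appears with coefficient $1$ in the wall relation normalised so that $F_i$ has the stated multiplicity), while $D_{c_j}\cdot C$ is obtained from the Mori-theoretic balancing: $C_j\cdot C = -\big(\text{coefficient of }c_j\big)$, distributed over $F_1$ and $F_2$ according to their volumes. Assembling these, the contribution of $a_i$ to $\deg C$ is $1/\Vol{F_i}$ and the contribution of $c_1,c_2$ through $F_i$ is $-\big(L_1(a_i)/L_1(c_1) + L_2(a_i)/L_2(c_2)\big)\cdot 1/\Vol{F_i}$, which sums to exactly the claimed expression.

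The cleanest way to organise this is to treat $F_1$ and $F_2$ separately: restrict attention to the (simplicial, possibly non-smooth) cone $F_i = \scone{c_1,c_2,a_i}$, where $\{c_1,c_2,a_i\}$ is a $\QQ$-basis of $N_\QQ$, and compute the local contribution of $D_{c_1}+D_{c_2}+D_{a_i}$ to $D\cdot C$ using the dual basis; the factor $1/\Vol{F_i}$ is precisely the lattice index correction coming from the fact that $\{c_1,c_2,a_i\}$ need not be a $\ZZ$-basis. Then sum over $i=1,2$. I expect the main obstacle to be purely bookkeeping: getting the signs and the lattice-index normalisations exactly right, in particular verifying that the coefficient with which $c_j$ enters the wall relation across $F_i$ is indeed $L_j(a_i)/L_j(c_j)$ and that the $\Vol{F_i}$ normalisation is consistent between the $a_i$-term and the $c_j$-terms. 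There is no conceptual difficulty --- it is the standard toric intersection calculus of \cite{Dai02} or Cox--Little--Schenck --- but care is needed because $T$ is only $\QQ$-factorial, so every intersection number is a rational number and the naive smooth-case formulas must be corrected by the multiplicities $\Vol{F_i}$.
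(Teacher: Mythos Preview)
Your proposal is correct and follows essentially the same approach as the paper: reduce $\deg C$ to the four intersection numbers $C\cdot C_1,\,C\cdot C_2,\,C\cdot A_1,\,C\cdot A_2$, input $C\cdot A_i=1/\Vol{F_i}$, and determine $C\cdot C_j$ from linear relations. The one organisational difference is that the paper does not attempt to split contributions ``through $F_i$'' or invoke the wall relation directly; instead it takes the principal divisors $D_i=\operatorname{div}(L_i)$, writes $0=C\cdot D_i = L_i(c_i)\,C\cdot C_i + L_i(a_1)\,C\cdot A_1 + L_i(a_2)\,C\cdot A_2$ (using $L_i(c_{3-i})=0$ and $C\cdot E_j=0$), and solves these two equations for $C\cdot C_1$ and $C\cdot C_2$. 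This is equivalent to your wall-relation computation---the vector of intersection numbers \emph{is} the wall relation---but it sidesteps the slightly awkward ``distribute $C_j\cdot C$ over $F_1,F_2$ by volume'' step, which in your write-up is more a reorganisation of the final answer than a derivation.
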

\begin{proof}
Recall that the anticanonical degree of $C$ is the sum of its intersection with all boundary divisors
\[
\deg{C}=C\cdot\left(C_1+C-2+A_1+A_2+\sum_{i=1}^r E_i\right)=C\cdot C_1+C\cdot C_2+C\cdot A_1+C\cdot A_2,
\]
with intersection numbers given by
\[
C\cdot A_1=1/\Vol{F_1},\qquad C\cdot A_2=1/\Vol{F_2}.
\]
Given the linear forms $L_1$, $L_2$ as above, let $D_i$ be the principal divisor corresponding to the form $L_i$. Then:
\[
 0\equiv D_i\equiv L_i(c_1)C_1+L_i(c_2)C_2+L_i(a_1)A_1+L_i(a_2)A_2+\sum_{j=1}^r{L_i(e_j)E_j}.
\]
This gives:
\[
 0=C\cdot D_i = L_i(c_i)C\cdot C_i +L_i(a_1)C\cdot A_1 + L_i(a_2)C\cdot A_2.
\]
Hence:
\begin{equation*}
 C\cdot C_i=-\frac{L_i(a_1)}{L_i(c_i)}\frac{1}{\mathrm{Vol}\left(F_1\right)}-\frac{L_i(a_2)}{L_i(c_i)}\frac{1}{\mathrm{Vol}\left(F_2\right)}.
  \qedhere
  \end{equation*}
\end{proof}

\begin{corollary}\label{cor:curve_degree:calculation}
 Let $X$ be a toric Fano threefold and let $P$ the corresponding three-dimensional Fano polytope. Let $E$ be an edge of $P$ corresponding to a curve $C$ on $X$, and let $F_1$ and $F_2$  be the two facets of $P$ meeting at $E$. Let $c_1$ and $c_2$ be the two vertices of $P$ lying on $E$, and let $a_1$ and $a_2$ be vertices of $P$ lying on $F_1\setminus E$ and $F_2\setminus E$, respectively. Then:
 \[
  \deg{C}=\frac{1}{\abs{a_1\cdot\left(c_1\times c_2\right)}}+\left(1+
  \frac{\left(c_1-c_2\right)\cdot\left(a_1\times a_2\right)}{a_1\cdot\left(c_1\times c_2\right)}
  \right)\frac{1}{\abs{a_2\cdot\left(c_1\times c_2\right)}}.
 \]
\end{corollary}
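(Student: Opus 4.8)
The plan is to derive Corollary~\ref{cor:curve_degree:calculation} as a direct specialization of Lemma~\ref{lemma:curve_degree}. The key observation is that in the three-dimensional lattice $N\cong\ZZ^3$, the linear forms appearing in the lemma can be written explicitly in terms of cross products, and the lattice index $\Vol{F_i}$ equals the absolute value of a $3\times 3$ determinant, i.e.\ $\Vol{F_i}=\abs{a_i\cdot(c_1\times c_2)}$.

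First I would choose the linear forms $L_1,L_2$ concretely. Since $L_1$ must vanish on $c_2$ but not on $c_1$, a natural choice is $L_1(v)=v\cdot(c_2\times w)$ for a suitable auxiliary vector $w$; more efficiently, one can simply take $L_1$ and $L_2$ to be (up to scaling) the unique-up-to-scalar linear forms vanishing on the respective rays within a convenient complement, but the cleanest route is to observe that the \emph{ratios} $L_1(a_j)/L_1(c_1)$ and $L_2(a_j)/L_2(c_2)$ appearing in the lemma are independent of the choice of $L_i$ (any two forms vanishing on the same ray $c_i$ differ, on the span, by a scalar once we also fix behaviour, but in general the ratio is what is intrinsic). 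Concretely, pick $L_1(v)=\det(v,c_2,a_2)=v\cdot(c_2\times a_2)$ and $L_2(v)=\det(v,c_1,a_2)=v\cdot(c_1\times a_2)$; then $L_1(c_2)=0$ and $L_2(c_1)=0$ as required, and $L_1(a_2)=L_2(a_2)=0$ as well, which will make the second factor in the lemma collapse nicely. Substituting, the term $1-L_1(a_2)/L_1(c_1)-L_2(a_2)/L_2(c_2)$ becomes simply $1$, giving the coefficient $1/\Vol{F_2}=1/\abs{a_2\cdot(c_1\times c_2)}$ for that half of the sum.

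Next I would compute the first half. With the same $L_1,L_2$, I need $1-L_1(a_1)/L_1(c_1)-L_2(a_1)/L_2(c_2)$. Here $L_1(a_1)=a_1\cdot(c_2\times a_2)=\det(a_1,c_2,a_2)$, $L_1(c_1)=c_1\cdot(c_2\times a_2)=\det(c_1,c_2,a_2)$, and similarly $L_2(a_1)=\det(a_1,c_1,a_2)$, $L_2(c_2)=\det(c_2,c_1,a_2)=-\det(c_1,c_2,a_2)$. Assembling these over the common denominator $\det(c_1,c_2,a_2)$ and simplifying the numerator using multilinearity and antisymmetry of the determinant, the expression should collapse to $1+\frac{(c_1-c_2)\cdot(a_1\times a_2)}{a_1\cdot(c_1\times c_2)}$ after also rewriting $\Vol{F_1}=\abs{a_1\cdot(c_1\times c_2)}$; one must track signs carefully, using that all the relevant determinants have consistent orientation because $c_1,c_2,a_i$ are vertices of facets of a Fano polytope containing the origin in its interior (so the cones $F_i$ are "positively oriented" in a compatible way). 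This sign bookkeeping — reconciling the signed determinants with the absolute values $\abs{\cdot}$ in the statement, and confirming that the cross-term indeed lands on the coefficient of $1/\abs{a_2\cdot(c_1\times c_2)}$ rather than the other one — is the main obstacle; everything else is a mechanical rewrite of Lemma~\ref{lemma:curve_degree} using $\det(u,v,w)=u\cdot(v\times w)$ and the identification of sublattice index with $\abs{\det}$. I would close by noting that an interior-point-free edge $E$ means $c_1,c_2$ are the only lattice points on $E$, so the hypotheses of Lemma~\ref{lemma:curve_degree} (in particular $F_1\neq F_2$ and simpliciality, which holds since $P$ is a Fano polytope with $T$ $\QQ$-factorial after a crepant simplicial refinement, or directly since we only use the two facets at $E$) are met.
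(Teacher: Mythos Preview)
Your overall strategy matches the paper's: specialize Lemma~\ref{lemma:curve_degree} by writing $\Vol{F_i}=\abs{a_i\cdot(c_1\times c_2)}$ and choosing the linear forms via cross products. However, your particular choice of forms is the mirror image of the paper's, and this causes the confusion you yourself flag. The paper takes $L_1(x)=x\cdot(c_2\times a_1)$ and $L_2(x)=x\cdot(a_1\times c_1)$, i.e.\ forms that also vanish on $a_1$; this makes the \emph{first} bracket in the lemma collapse to $1$, so the term $1/\abs{a_1\cdot(c_1\times c_2)}$ appears cleanly and all the complexity lands on the coefficient of $1/\abs{a_2\cdot(c_1\times c_2)}$, exactly matching the stated formula. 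Your choice (forms vanishing on $a_2$) makes the \emph{second} bracket collapse instead, so the cross-term multiplies $1/\Vol{F_1}$, not $1/\Vol{F_2}$; carrying out your ``first half'' computation actually yields $1+\dfrac{(c_1-c_2)\cdot(a_2\times a_1)}{a_2\cdot(c_1\times c_2)}$ (note the $a_2$ in the denominator and the swapped cross product), which is the target formula with the roles of $a_1$ and $a_2$ interchanged. The two are equivalent because $a_1\cdot(c_1\times c_2)$ and $a_2\cdot(c_1\times c_2)$ have opposite signs (the $a_i$ lie on opposite sides of the plane spanned by $c_1,c_2$), but that is an extra step you did not anticipate; the simplest fix is to use the paper's forms with $a_1$.

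Separately, your handling of simpliciality is too vague. Lemma~\ref{lemma:curve_degree} assumes a simplicial fan, but the facets $F_i$ of $P$ need not be triangles. The paper's fix is concrete: for each $F_i$, pick a triangulation containing the triangle $(c_1,c_2,a_i)$, pass to the corresponding small resolution, and compute $\deg C$ there; the answer is independent of the triangulation. Your remark about a ``crepant simplicial refinement'' gestures at this but does not say why the chosen vertex $a_i$ can be arranged to share a simplex with $c_1,c_2$, which is precisely what makes the formula involve $a_i$.
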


\begin{proof}
 If the point $p_1\in X$ corresponding to the face $F_1$ is singular, then one can take a small resolution $X'$ of $X$ at this point and calculate the degree of $C$ via that resolution. This is done by choosing a triangulation of $F_1$, with the result being independent of the choice. So, by picking a triangulation containing the triangle $\left(c_1,c_2,a_1\right)$, one can assume that $p_1$ is a smooth point of $X$.

 Similarly, the point $p_2\in X$ corresponding to the face $F_2$ of $P$ can also be assumed to be smooth. Therefore, $X$ satisfies the conditions of Lemma~\ref{lemma:curve_degree}.

 Take:
 \[
  L_1(x)=x\cdot\left(c_2\times a_1\right),\ L_2(x)=x\cdot\left(a_1\times c_1\right).
 \]
 Since $\mathrm{Vol}\left(F_i\right)=\abs{a_i\cdot\left(c_1\times c_2\right)}$, have:
 \[
  \deg{C}=\frac{1}{\abs{a_1\cdot\left(c_1\times c_2\right)}}+\left(1-\frac{a_2\cdot\left(c_2\times a_1\right)}{c_1\cdot\left(c_2\times a_1\right)}-\frac{a_2\cdot\left(a_1\times c_1\right)}{c_2\cdot\left(a_1\times c_1\right)}\right)\frac{1}{\abs{a_2\cdot\left(c_1\times c_2\right)}},
 \]
 which simplifies to the form above.
\end{proof}

\begin{definition}[$F$-equivalence of reflexive polytopes]\label{defn:F_equivalence_reflexive}
Two three-dimensional reflexive polytopes $P,Q\in\NRR$ are $F$-equivalent, and we write $P\Fequiv Q$, if there exists a finite sequence $P_0,P_1,\ldots,P_k\subset\NRR$ of reflexive polytopes satisfying:
\begin{enumerate}
\item
$P$ and $Q$ are $\GL_3(\ZZ)$-equivalent to $P_0$ and $P_k$, respectively;
\item
For each $1\leq i\leq k$ we have that either $\V{P_i}\subsetneq\V{P_{i-1}}$ or $\V{P_{i-1}}\subsetneq\V{P_i}$.
\item\label{item:F_equivalence_reflexive_3}
If $\V{P_i}\subsetneq\V{P_{i-1}}$ then there exists a face $F$ of $P_i$ and $\varphi\in\GL_3(\ZZ)$ such that $\varphi(F)$ is one of the seven faces in Table~\ref{table:proj:faceTypes}. Furthermore, the points $\varphi(\V{P_{i-1}}\setminus\V{P_i})$ are equal to the corresponding points in Table~\ref{table:proj:faceTypes}, and $\bdry{F}\subset\bdry{P_{i-1}}$. The case when $\V{P_{i-1}}\subsetneq\V{P_i}$ is similar, but with the roles of $P_{i-1}$ and $P_i$ exchanged.
\end{enumerate}
\end{definition}

If $P\Fequiv Q$ then the corresponding Gorenstein toric Fano threefolds $X_P$ and $X_Q$ are related via a sequence of projections.

\begin{remark}\label{rem:F_equivalence_reflexive_cases}
The requirement that $\bdry{F}\subset\bdry{P_{i-1}}$ in Definition~\ref{defn:F_equivalence_reflexive}\eqref{item:F_equivalence_reflexive_3} perhaps needs a little explanation. Consider the case when $F$ is a facet. Adding the new vertices can affect a facet $F'$ adjacent to $F$, with common edge $E$, in one of three ways. Let $u_{F'}\in M$ be the primitive dual lattice vector defining the hyperplane at height one containing $F'$. Let $v_1,\ldots,v_s\in N$ be the points to be added according to Table~\ref{table:proj:faceTypes}, so that $P_{i-1}=\conv{P_i\cup\{v_1,\ldots,v_s\}}$.
\begin{enumerate}
\item
If $u_{F'}(v_i)<1$ for $1\leq i\leq s$ then $F'$ is unchanged by the addition of the new vertices, and hence $F'$ is also a facet of $P_{i-1}$.
\item
Suppose that $u_{F'}(v_i)=1$ for $1\leq i\leq m$, and $u_{F'}(v_i)<1$ for $m+1\leq i\leq s$, for some $1\leq m\leq s$. Then the facet $F'$ is transformed to the facet $F'':=\conv{F'\cup\{v_i\mid 1\leq i\leq m}$ in $P_{i-1}$. Notice that $F'\subset F''$, but that $E$ is no-longer an edge of $F''$. This is equivalent to a blowup, followed by a contraction of the curve corresponding to $E$.
\item
The final possibility is that there exists one (or more) of the $v_i$ such that $u_{F'}(v_i)> 1$. When we pass to $P_{i-1}$ we see that $F'$ is no-longer contained in the boundary; in particular, $\intr{E}\subset\intr{P}_{i-1}$ and so $\bdry{F}\not\subset\bdry{P_{i-1}}$. This case is excluded since it does not correspond to a projection between the two toric varieties.
\end{enumerate}
These three possibilities are illustrated in Figure~\ref{fig:F_equivalence_reflexive_cases}.
\end{remark}

\begin{figure}[tbp]
\centering
\begin{tabular}{cp{1em}c}
\includegraphics[scale=0.5]{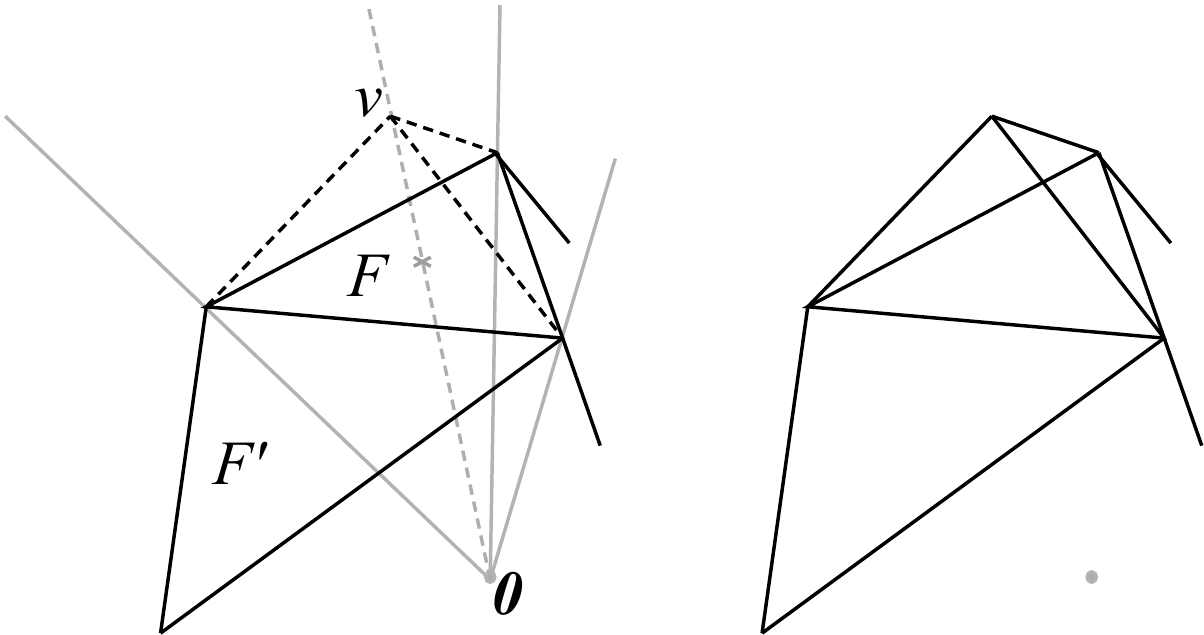}&&
\includegraphics[scale=0.5]{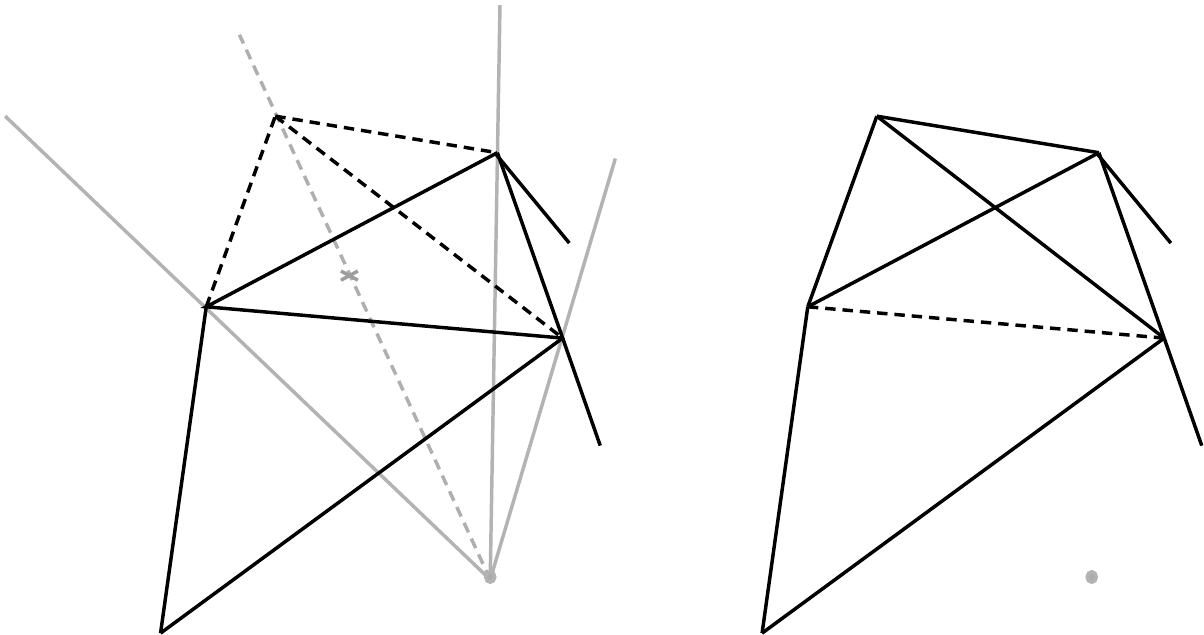}\\
(i) $u_{F'}(v)<1$&&(ii) $u_{F'}(v)=1$\\
\multicolumn{3}{c}{\includegraphics[scale=0.5]{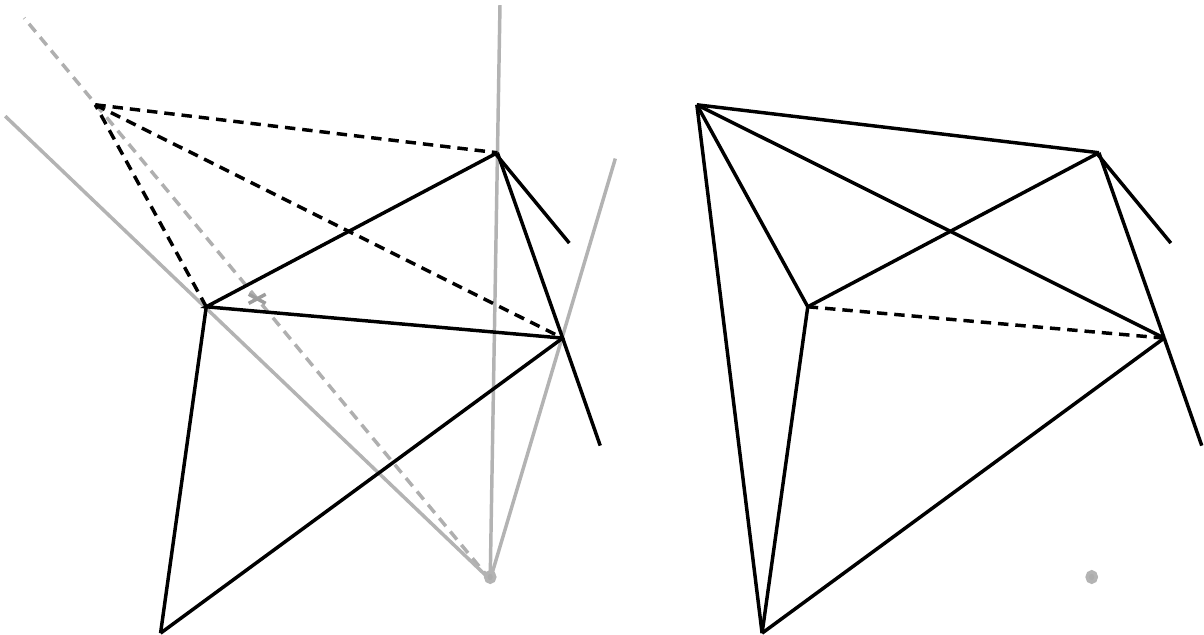}}\\
\multicolumn{3}{c}{(iii) $u_{F'}(v)>1$}
\end{tabular}
\caption{The three possible ways a facet $F'$ adjacent to $F$ can be modified when adding a new vertex $v$. See Remark~\ref{rem:F_equivalence_reflexive_cases} for an explanation.}
\label{fig:F_equivalence_reflexive_cases}
\end{figure}

\begin{corollary}[Reflexive polytopes are $F$-connected]\label{cor:F_connected_reflexive}

\noindent
\begin{enumerate}
\item
Let $P,Q\subset\NRR$ be any two three-dimensional reflexive polytopes. Then $P\Fequiv Q$.
\item\label{part:F_connected_reflexive_2}
Let $\mathcal{G}_F$ be the directed graph whose vertices are given by the three-dimensional reflexive polytopes, with an edge $P\rightarrow Q$ if and only if there exists a projection from $X_P$ to $X_Q$ (that is, if and only if $\V{P}\subset\V{Q}$, $P\Fequiv Q$, and $k=1$ in Definition~\ref{defn:F_equivalence_reflexive}). Then $\mathcal{G}_F$ has $16$ roots and $16$ sinks, and these are related via duality. The Reflexive IDs of the roots are
\[
1,\ 2,\ 3,\ 4,\ 5,\ 8,\ 9,\ 10,\ 11,\ 16,\ 18,\ 31,\ 45,\ 89,\ 102,\text{ and }105.
\]
The corresponding sinks have Reflexive IDs
\[
4312, 4282, 4318, 4284, 4287, 4310, 3314, 4313, 4315, 4299, 4238, 4251, 4303, 4319, 4309,\text{ and }4317.
\]
\end{enumerate}
\end{corollary}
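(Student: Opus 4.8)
The statement is a finite verification over the $4319$ three-dimensional reflexive polytopes classified by Kreuzer--Skarke~\cite{KS98}, so the plan is to build $\mathcal{G}_F$ explicitly and then read off its combinatorics. The conceptual input is already available: by Proposition~\ref{prop:reflexive_facet} and Corollary~\ref{cor:reflexive_projections}, every face $F$ of a reflexive polytope $P$ with no relative interior lattice point is, up to $\GL_3(\ZZ)$, one of the (seven) faces of Table~\ref{table:proj:faceTypes}, and the only candidate for a projection target of $X_P$ along $F$ is $Q:=\conv{\V{P}\cup S_F}$, where $S_F\subset N$ is the corresponding set of ``points added'' transported back by the $\GL_3(\ZZ)$-transformation bringing $F$ into standard position (with the coincidence $S_F=\{(2,1,1)\}$ in the degenerate subcases $a=1,b=0$ or $a=0,b=2$ of case~$2$, as noted after Corollary~\ref{cor:reflexive_projections}). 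In particular every edge of $\mathcal{G}_F$ arises in this way, so enumerating over all pairs $(P,F)$ produces the full edge set.

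Concretely, for each of the $4319$ polytopes $P$ and each interior-point-free face $F$ of $P$ (facet or edge), I would: (a) test whether $Q$ is reflexive (equivalently, appears in the classification); (b) test whether $\V{Q}=\V{P}\sqcup S_F$, i.e.\ that no vertex of $P$ is absorbed and each point of $S_F$ is a genuine vertex; and (c) test the boundary condition $\bdry{F}\subset\bdry{Q}$ of Definition~\ref{defn:F_equivalence_reflexive}\eqref{item:F_equivalence_reflexive_3} --- equivalently, that every facet $F'$ of $P$ adjacent to $F$ falls into case~(i) or~(ii) of Remark~\ref{rem:F_equivalence_reflexive_cases} and not case~(iii). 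Whenever (a)--(c) hold, $P\Fequiv Q$ with $k=1$ and I record a directed edge $P\to Q$. Part~(1) is then the assertion that the underlying undirected graph of $\mathcal{G}_F$ is connected, which I would certify by a union--find or breadth-first traversal; in fact the traversal shows that every three-dimensional reflexive polytope is $F$-equivalent to the simplex of Reflexive ID~$1$, i.e.\ to $\PP^3$.

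For part~(2), with the edge list in hand I would compute the in- and out-degrees of every vertex: the roots are the vertices of in-degree $0$ and the sinks those of out-degree $0$, and one reads off the two lists of Reflexive IDs directly, checking that each list has exactly $16$ entries and agrees with the ones stated. For the duality claim I would, for each root $P$, compute the dual polytope $\dual{P}$, look up its Reflexive ID via~\cite{GRDb}, and confirm it is the corresponding sink; this is a further finite check. It is tempting to deduce the root--sink pairing, and hence $16=16$, purely formally from polar duality, but the move-set of Definition~\ref{defn:F_equivalence_reflexive} is not manifestly self-dual --- adjoining a vertex from Table~\ref{table:proj:faceTypes} to $P$ corresponds to a corner cut of $\dual{P}$ rather than to an obvious projection move --- so I treat the duality as an observed output of the computation rather than as an a priori symmetry.

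The main work, and the only real obstacle, is implementing tests (a)--(c) faithfully to Definition~\ref{defn:F_equivalence_reflexive}: correctly distinguishing the facet and edge entries of Table~\ref{table:proj:faceTypes}, handling the parameterised family in case~$2$ together with its degenerations, and in particular enforcing the subtle boundary condition~(c), as well as certifying reflexivity and Reflexive-ID lookups for every candidate $Q$. Since each individual step is elementary, I would carry out the enumeration and the graph analysis with \textsc{Palp}~\cite{KS04} and the Graded Ring Database~\cite{GRDb}, the computer serving only to make the $4319$-fold search practical.
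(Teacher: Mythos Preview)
Your proposal is correct and takes essentially the same approach as the paper: the paper's proof is the single sentence ``This is a simple computer calculation using the classification~\cite{KS98} and Table~\ref{table:proj:faceTypes},'' and what you have written is precisely an explicit blueprint for that calculation. Your careful remark that the root--sink duality must be treated as an observed output rather than an a~priori symmetry is also in line with the paper, which notes just after the corollary that it knows of no structural reason for this duality.
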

\begin{proof}
This is a simple computer calculation using the classification~\cite{KS98} and Table~\ref{table:proj:faceTypes}.
\end{proof}

Corollary~\ref{cor:F_connected_reflexive} is somewhat surprising. Although we know of no reason to expect the reflexive polytopes to be $F$-connected, nor would we have expected the roots and sinks to be related via duality, we can make a small observation. Let $P,Q\subset\NRR$ be two reflexive polytopes such that there exists a finite sequence $P_0,P_1,\ldots,P_k\subset\NRR$ of reflexive polytopes satisfying conditions~\eqref{item:F_equivalence_smooth_1} and~\eqref{item:F_equivalence_smooth_2} in Definition~\ref{defn:F_equivalence} (that is, $P_{i-1}$ and $P_i$ are obtained via the addition or subtraction of a vertex). Then we say that $P$ and $Q$ are \emph{$I$-equivalent}. It is well-known that the three-dimensional reflexive polytopes are $I$-connected (although, once more, this is an experimental rather than theoretical fact). Furthermore, if one constructs a directed graph $\mathcal{G}_I$ in an analogous way to $\mathcal{G}_F$ in Corollary~\ref{cor:F_connected_reflexive}\eqref{part:F_connected_reflexive_2}, one finds the exact same list of $16$ roots and $16$ sinks (although the two graphs are different, clearly $\mathcal{G}_F$ can be included in $\mathcal{G}_I$ after possibly factoring edges). Here this duality is less mysterious: if $P$ is minimal with respect to the removal of vertices (that is, if $\conv{\V{P}\setminus\{v\}}$ is not a three-dimensional reflexive polytope for any $v\in\V{P}$) then $Q=\dual{P}$ is maximal with respect to the addition of vertices (that is, $\conv{\V{Q}\cup\{v\}}$ is not a reflexive polytope for any $v\notin Q$).

\section{Computing projections}\label{sec:projections_algorithm}
In order to do the associated computations, it is in most cases better to represent the toric degenerations of a smooth Fano variety by reflexive lattice polytopes. Given a toric degeneration, the associated reflexive Newton polytope is computed via the standard methods. However, it is worth noting that given a smooth Fano threefold $F$, it is sometimes possible to produce several different toric degenerations for $F$, resulting is different Newton polytopes, which give rise to different basic links. However, it is usually possible to connect these degenerations by a sequence of mutations (see~\cite{ACGK12}). One can choose to consider the set of degenerations of $F$ either as a whole (in order to concentrate on $F$ itself, using mutations to move between different degenerations) or as a collection of individual degenerations (to concentrate on the projections, avoiding the use of mutations).

Given a reflexive polytope, it is usually harder to determine which smooth Fano threefold it originated from. However, it is now possible due to~\cite{ACGK12,CCGK13}, where all the 3-dimensional reflexive polytopes have been listed and the possibilities for the corresponding smooth Fano threefolds have been given. 
It is worth noting that some of the polytopes do not correspond to any smooth Fano threefold even on the level of period sequences. In this paper such polytopes are disregarded, and the projections with them as intermediate steps are avoided.

In this representation, it is also possible to compute the basic links (projections) by manipulating the Newton polytopes directly.

We can take $X=\PP^{3}$ and start applying the allowed toric basic links to it. This will give us birational maps between $2868$ toric Fano threefolds with canonical Gorenstein singularities (containing representatives of $107$ different period sequences and $74$ different smooth Fano threefolds). Similarly, we can take $X$ to be any other suitable toric Fano threefold and apply the same process, getting maps between a number of toric Fano varieties. Since there are only finitely many such Fano threefolds, there must be some minimal set of projection roots that allows us to obtain all the other ones in this way. Clearly, this minimal set of roots will contain all the varieties represented by the minimal polytopes (with respect to the removal of vertices
). However, it may (depending on what projections are used) include some further toric varieties (for an example, see Remark~\ref{rmk:nonmin_roots}).

We can also look at the situation in a different way: we can consider the toric Fano threefolds with canonical Gorenstein singularities primarilly as degenerations of smooth Fano threefolds. From this point of view, the mutations (see~\S\ref{sec:mutation}) give us a second set of basic links, connecting pairs of degenerations of the same smooth Fano threefold. With this in mind, we can repeat the above process, aiming to represent all the smooth Fano threefolds (via their degenerations). One can do this with or without allowing the use of mutations. Since there are only finitely many toric Fano threefolds with canonical Gorenstein singularities, it is possible to complete all the constructions described above.

Given a starting reflexive polytope (a toric Fano threefold with canonical Gorenstein singularities), the program considers all its faces (torus-invariant points) and edges (torus-invariant curves). It selects the relevant ones (according to the rules of the ``allowed'' projections) and projects from them, obtaining a number of new polytopes (discarding those that turn out not to be reflexive). These polytopes are then added to the processing queue, taking care to avoid duplicates to the polynomials that have been found previously (two explicitly given three-dimensional polytopes are considered to be the same if one can be mapped to the other by an action of the orthogonal group on the underlying $\ZZ^3$ lattice). Such a pair of duplicates is merely an indication that there are several projection paths between a pair of varieties.

For example, there is a pair of paths between polytope $232$ (corresponding to variety $B_3$) and polytope $1969$ (corresp. to variety $\XX{2}{8}$) --- such a path can go either through
polytope $428$ (variety $B_2$) or through polytope $1599$ (variety $\XX{2}{15}$). These maps correspond to taking a pair of points (a smooth point and a cDV point) on the degeneration of $B_3$ and projectiong from them, the intermediate polytope is defined by choosing the order of projections.

Aside from obtaining new polytopes via projections from previously known ones, the program (when set to do so) also builds them by considering the possible antiprojections (i.e.\ inverses of projections) from the known polytopes. This serves a dual purpose: on the one hand, it makes it easier to explore ways of connecting several different polytopes (or the corresponding smooth Fano threefolds); on the other hand, since the procedures for the projections and the antiprojections have been written independently, this serves as an error-checking technique (if a projection $P_1\leadsto P_2$ is found, not finding an antiprojection $P_2\leadsto P_1$ of the same type would indicate an error. This is checked automatically to make sure the calculations are accurate).

Once the projection graph obtained this way is built, the program identifies all the obtained polytopes according to their ID numbers from the Graded Ring Database~\cite{GRDb}. This also makes it possible to read off other information about the polytopes, including their period sequence (bucket) numbers, which (if any) smooth Fano threefolds they are degenerations of and what mutations can be made from them.

The data gathered this way was fed into a second program for analysis. The program considered projections between reflexive polytopes and calculated the pairs of smooth Fano threefolds that these projections link (via toric degenerations). Having found such links, the program also determined the ``root'' smooth Fano threefolds and built the corresponding graph. In order to help justify the graph, it also produced examples of each of the links (trying to pick ``good'' degenerations for each threefold).

\section{Projection directed graphs}\label{sec:projection_trees}
We are interest in constructing a three-dimensional analogue of Figure~\ref{figure:del Pezzo tree}. As such, we wish to restrict our attention to those reflexive polytopes $P$ whose corresponding Gorenstein Fano variety $X_P$ is a toric degeneration of a smooth Fano threefold $X$. As a first approximation, it is reasonable to restrict our attention to those $P$ such that $\Hilb(X_P,-K_{X_P})=\Hilb(X,-K_X)$; this condition on the Hilbert series is satisfied by $4310$ of the reflexive polytopes. Even if we only allow projections passing through this subset, the result is $F$-connected.

The calculations described above have been performed, yielding the following results.
Given a toric variety $T$ let us call a projection in an anticanonical embedding from tangent space to invariant smooth point, invariant cDV point, or an invariant smooth line (see Table~\ref{table:proj:faceTypes}) F-projection.


%
%

\begin{theorem}\label{thm:proj:graph_proj-mut}
 Given any smooth Fano threefold $X$, there exists a Gorenstein toric degeneration of $X$ that can be obtained by a sequence of mutations and F-projections
 from a toric degeneration of one of $15$ smooth Fano threefolds (from now on referred to as the projections roots, see Table~\ref{table:proj:rootFanos}). 
 The directed (sub)graph connecting all Fano varieties via the projections and mutations is presented in Table~\ref{table:proj:choice}.
Each of toric degenerations we use can be equipped with a toric Landau--Ginzburg model.

 \end{theorem}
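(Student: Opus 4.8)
The proof is algorithmic, and the plan is to combine the degeneration catalogue of Section~\ref{sec:degen_intro}, the projection-and-mutation machinery of Sections~\ref{sec:mutation} and~\ref{sec:projections_algorithm}, and an exhaustive computer search over the Kreuzer--Skarke classification~\cite{KS98}. First I would record, for each of the $98$ deformation families of smooth Fano threefolds with very ample anticanonical class, at least one Gorenstein toric degeneration. These are exactly the ones assembled in Section~\ref{sec:degen_intro}: Galkin's small toric degenerations, the low-degree degenerations of~\cite{CI14}, the triangulation construction of Theorem~\ref{thm:nice}, products with del~Pezzo surfaces, the complete-intersection-in-toric construction of~\S\ref{sec:ci} (which supplies the vast majority), and the ad hoc construction of~\S\ref{sec:exceptional_case} for the single exceptional case $\XX{2}{14}$. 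Theorem~\ref{theorem: toric LG's} guarantees such a degeneration exists, with smooth deformation space, for every generic such $X$, and the mutation results of Section~\ref{sec:mutation} enlarge the pool: any mutation of a reflexive Newton polytope of a degeneration of $X$ is again a degeneration of $X$.

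Next I would build the directed graph whose vertices are the reflexive polytopes arising this way --- as a working approximation, the $4310$ reflexive polytopes whose anticanonical Hilbert series coincides with that of a smooth Fano threefold --- with F-projection edges computed by directly manipulating Newton polytopes according to the face classification of Table~\ref{table:proj:faceTypes} and the curve-degree formula of Corollary~\ref{cor:curve_degree:calculation}, together with mutation edges computed as in Section~\ref{sec:mutation}. This is precisely what the programs of Section~\ref{sec:projections_algorithm} do, with an independent implementation of anti-projections built in as a consistency check. Running the search to completion, the resulting graph has exactly $15$ source components; choosing one representative root from each yields the list in Table~\ref{table:proj:rootFanos}, and recording, for each smooth Fano threefold, one chosen path of F-projections and mutations from a root down to one of its degenerations produces Table~\ref{table:proj:choice}.

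It then remains to check that every toric variety $T$ appearing along these paths carries a toric Landau--Ginzburg model in the sense of Definition~\ref{definition: toric LG}. The toric condition is the content of Theorem~\ref{theorem:degenerations} (and its extension in Appendix~\ref{sec:appendix}), namely that $T$ is a toric degeneration of the corresponding Fano threefold with $\Newt{f}$ spanning the fan of $T$; the period condition holds for all Fano threefolds by~\cite{CCGGK13}; and the Calabi--Yau condition holds by~\cite{Prz17}. Concretely the Laurent polynomial $f$ is exhibited either as a Minkowski polynomial from~\cite{ACGK12} with the appropriate Minkowski ID, or as one of the additional toric models computed with \textsc{Magma}~\cite{magma} in step~\eqref{stepone}, the mutation equivalence between different choices being guaranteed by Lemma~\ref{lem:period_preserved}.

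The main obstacle is the exhaustiveness and reliability of the computer search rather than any single hard lemma. One must be sure that no smooth Fano threefold is omitted, that every intermediate reflexive polytope really is a degeneration of the Fano it is ascribed to --- and not merely a polytope sharing its period sequence, which is why polytopes with no smooth Fano partner are discarded at the outset --- and, most delicately, that the minimal number of roots is genuinely $15$ and not an artefact of the order in which projections and mutations were applied. This last point is where the $F$-connectedness of the reflexive polytopes (Corollary~\ref{cor:F_connected_reflexive}) is useful: because the underlying graph is so highly connected, the roots are forced to be exactly those Fano threefolds that fail to occur as an F-projection, after mutation, of any other, and identifying mutation-equivalent polytopes removes the path-order ambiguity. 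Cross-referencing every polytope against the Graded Ring Database~\cite{GRDb}, and re-deriving a sample of the displayed links by hand, provides the final layer of verification.
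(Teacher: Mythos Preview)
Your proposal is correct and follows essentially the same approach as the paper: an algorithmic proof that assembles the degeneration catalogue of Section~\ref{sec:degen_intro}, runs the projection-and-mutation search described in Section~\ref{sec:projections_algorithm} over the Kreuzer--Skarke list, records the resulting graph with explicit polytope IDs (the paper simply points to Appendix~\ref{sect:minGraph} and Table~\ref{table:proj:choice} for this), and then invokes the period, Calabi--Yau, and toric conditions to equip each degeneration with a toric Landau--Ginzburg model. The paper's own proof is considerably terser than yours --- it essentially says ``see the appendix tables'' --- so your elaboration of what underlies those tables is accurate and appropriate; one small terminological slip is that the $15$ entries in Table~\ref{table:proj:rootFanos} are \emph{roots} (sources) of the directed graph, not connected components (the graph has three connected components, as the paper notes immediately after the proof).
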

\begin{table}
 \begin{center}
  \begin{tabular}{ccl}
  Variety&Degree&Eliminated by projections from:\\
  \hline
   $\PP^3$&$64$&\\
   $\XX{2}{36}$&$62$&\\
   $Q^3$&$54$&\\
   $\XX{3}{27}$&$48$&\\
   \hline
   $\XX{2}{33}$&$54$&quartics\\
   \hline
   $\XX{3}{29}$&$50$&conics\\
   $\XX{3}{28}$&$48$&conics\\
   $\XX{4}{11}$&$42$&conics\\
   $\XX{2}{28}$&$40$&conics\\
   $\XX{3}{22}$&$40$&conics\\
   $\XX{5}{3}$&$36$&conics\\
   $\XX{6}{1}$&$30$&conics\\
   $\XX{3}{9}$&$26$&conics\\
   $\XX{7}{1}$&$24$&conics\\
   $\XX{8}{1}$&$18$&conics\\
   \hline
  \end{tabular}

 \end{center}
 \caption{Projection roots}\label{table:proj:rootFanos}
\end{table}
\begin{proof}
The list of such paths that minimize the number of projections used form the graph that can be seen in Appendix~\ref{sect:minGraph}. 
The vertices represent the Fano varieties (one vertex can represent several different degenerations of the same variety), and the arrows represent projections between degenerations of different varieties. The existence of all the arrows is shown in Appendix~\ref{sect:minGraph}
, where for each arrow an example of a relevant pair of explicit degenerations is given (according to their polytope ID from the Graded Ring Database).
\end{proof}

\begin{remark}\label{rmk:nonmin_roots}
 Not all the projection roots correspond to polytopes that are minimal  with respect to the removal of vertices: 
 for example, none of the polytopes corresponding to variety $\XX{2}{33}$ are minimal. The appearance of such roots depends on the choice of projections used --- such roots can be eliminating by allowing the use of additional projection types, like projections from curves of higher degree. In fact, a degeneration of $\XX{2}{33}$ can be obtained by projecting $\PP^{3}$ (polytope ID~$1$, minimal) from a curve of degree $4$ (obtaining polytope ID~$7$). However, as discussed above, such projections do not represent natural geometric operations on Fano varieties, and hence are not being considered.
\end{remark}

One can note that the graph above is not connected --- in fact, it has $3$ connected components. This is due to the choice of the types of projections used in the graph: using only projections from points and lines (as discussed above), the graph splits into the main component and two one-variety components (containing varieties $\XX{7}{1}$ and $\XX{8}{1}$). Variety $\XX{7}{1}$ (polytope ID~$506$) can be reached from the main component by a projection of $\XX{6}{1}$ (polytope ID~$357$) from a conic, and $\XX{8}{1}$ (polytope ID~$769$) can be obtained by projecting $\XX{7}{1}$ (polytope ID~$506$) from a conic.

\begin{theorem}
\begin{itemize}
\label{hyi}
 \item[(i)]
For any smooth Fano threefold there is its toric degeneration such that all these degenerations are connected by sequences of projections with toric centres which are either cDV points or smooth lines. The directed graph of such projections
containing toric degenerations of all smooth Fano threefolds with very ample anticanonical class
can be chosen as a union of $15$ trees with roots shown in Table~\ref{table:proj:rootFanos}.
The directed (sub)graph connecting all Fano varieties via the projections and mutations is presented in Figure~\ref{fig:graph:projections:i}.
Each of the toric degenerations can be equipped with a toric Landau--Ginzburg model. 

\item[(ii)]
For any smooth Fano threefold with very ample anticanonical class there is its Gorenstein toric degeneration such that all these degenerations are connected by sequences of projections with toric centres which are either cDV points, smooth lines, or smooth conics. The directed (Sub)graph of such projections 
can be chosen to have five roots which are: $\PP^3$, $\XX{2}{36}$, $Q^3$, $\XX{3}{27}$, and~$\XX{2}{33}$.
The directed graph connecting all Fano varieties with very ample anticanonical class via the projections and mutations is presented in Figure~\ref{fig:graph:projections:ii}.
Each of the toric degenerations can be equipped with a toric Landau--Ginzburg model.
\end{itemize}

\end{theorem}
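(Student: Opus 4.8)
The plan is to bootstrap from Theorem~\ref{thm:proj:graph_proj-mut}, which already attaches to every smooth Fano threefold $X$ with very ample $-K_X$ a Gorenstein toric degeneration joined to one of the fifteen roots of Table~\ref{table:proj:rootFanos} by a sequence of mutations and F-projections. Part~(i) asks for two strengthenings of this: that one may fix a \emph{single} toric degeneration of each $X$, and that the F-projections between these chosen models, on their own, form a union of rooted trees rather than merely a connected graph. Part~(ii) asks instead that admitting also projections from toric smooth conics (the basic link $\Pi_{\mathrm{c}}$ of Definition~\ref{def:basicLink:3D}) cuts the number of roots down to five. Both are finite combinatorial optimisations over the graph produced by the algorithm of Section~\ref{sec:projections_algorithm}, so the argument is computer-assisted in the same way as Theorem~\ref{thm:proj:graph_proj-mut}; I set out the order in which I would carry it out, disposing first of the Landau--Ginzburg clause, since it is identical in both parts. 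Every toric degeneration we use is a Gorenstein toric Fano threefold occurring in Theorems~\ref{theorem: main 1} and~\ref{theorem: main 2}, hence by Theorem~\ref{theorem:degenerations} it is a toric degeneration of the corresponding smooth Fano and so satisfies the toric condition of Definition~\ref{definition: toric LG}; it carries a Laurent polynomial --- a Minkowski polynomial of~\cite{ACGK12,CCGK13}, or, for the extra toric models assembled in~\S\ref{sec:degen_intro}, step~\eqref{stepone}, one transported along the connecting polytope mutation so that by Lemma~\ref{lem:period_preserved} it still has the right period --- whose period and Calabi--Yau conditions then hold by~\cite{CCGGK13} and~\cite{Prz17}. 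Thus every vertex of every graph below comes with a toric Landau--Ginzburg model.

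For part~(i) I would first assemble, for each of the $98$ smooth Fano threefolds, its pool of available Gorenstein toric degenerations: those recorded in~\S\S\ref{app:small},~\ref{app:lowdeg},~\ref{app:tri},~\ref{app:prod} and~\ref{app:ci}, the ad hoc construction of~\S\ref{sec:exceptional_case} for $\XX{2}{14}$, and everything reachable from these by mutation as in~\S\ref{app:mutation}. I would then run the projection algorithm of Section~\ref{sec:projections_algorithm} with the centres restricted to toric smooth points, toric cDV points, and toric smooth lines, the last condition tested against the curve-degree formula of Lemma~\ref{lemma:curve_degree} and Corollary~\ref{cor:curve_degree:calculation}, with the antiprojection routine cross-checking every arrow. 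This produces a directed graph on reflexive polytopes, and the remaining task is to choose one representative degeneration of each Fano together with one incoming F-projection into it so that the chosen arrows form a union of rooted trees whose roots are exactly the fifteen varieties of Table~\ref{table:proj:rootFanos}. That this finite search succeeds, with an explicit witnessing pair of polytope IDs for each arrow, is exactly what is displayed in Appendix~\ref{sect:minGraph} and Figure~\ref{fig:graph:projections:i}; the fifteen roots cannot be reduced further at this stage precisely because quartic and conic centres are barred --- $\XX{2}{33}$ is minimal only for the quartic projection $1\rightsquigarrow 7$ of Remark~\ref{rmk:nonmin_roots}, and the ten ``conics'' roots of Table~\ref{table:proj:rootFanos} are genuine sources of the F-projection graph.

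For part~(ii) I would rerun the same procedure after adjoining projections from toric smooth conics to the allowed list. The ten roots flagged ``conics'' then cease to be sources --- for instance $\XX{7}{1}$ (polytope $506$) becomes a conic projection of $\XX{6}{1}$ (polytope $357$) and $\XX{8}{1}$ a conic projection of $\XX{7}{1}$, the two links already noted after Theorem~\ref{thm:proj:graph_proj-mut} --- whereas $\XX{2}{33}$, obtainable from $\PP^3$ only by a quartic projection, survives. What is left are the five roots $\PP^3$, $\XX{2}{36}$, $Q^3$, $\XX{3}{27}$, $\XX{2}{33}$; the resulting graph, with the mutation edges between alternative degenerations adjoined, is Figure~\ref{fig:graph:projections:ii}.

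The main obstacle is not any single geometric statement --- the discussion around Definition~\ref{def:basicLink:3D} already guarantees each admissible projection lands on a Gorenstein Fano, and Theorem~\ref{theorem:degenerations} supplies toricity --- but the scale of the computation together with the need for a \emph{globally consistent} choice. One must first have enough toric degenerations that every Fano appears in the graph at all (the tightness here is visible in the exceptional treatment of $\XX{2}{14}$), then check of each of the thousands of candidate arrows that its centre really is an invariant line or conic meeting no non-cDV point and that its target is again reflexive with the prescribed Hilbert series, and finally solve the tree-selection problem of part~(i), which is a genuine optimisation rather than a mere connectivity question. All of this is carried out by the \textsc{Palp}-based programs of~\S\ref{sec:computer_algebra}, the independently written projection and antiprojection routines providing the automatic error-checking that makes the computation reliable.
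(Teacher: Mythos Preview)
Your proposal is correct and follows essentially the same computer-assisted approach as the paper: run the projection algorithm of Section~\ref{sec:projections_algorithm}, then select one degeneration per Fano so that the resulting arrows form the claimed graph, with the explicit witness recorded in Table~\ref{table:proj:choice}. One subtlety the paper flags that you gloss over: merely adjoining conics to the part~(i) choices would leave $\XX{3}{28}$ as a sixth root, so four degenerations must be re-chosen for part~(ii) --- this is why Table~\ref{table:proj:choice} carries separate columns for the two parts.
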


\begin{proof}
 The algorithm described in~\S\ref{sec:projections_algorithm} 
 was implemented to find all possible projections between degenerations of smooth Fano threefolds. After that, one can choose a toric degeneration for each of the Fano varieties as described in the statement of the theorem. An example of a choice of toric degenerations that create such graphs can be seen in Table~\ref{table:proj:choice}. For each variety, it gives the corresponding Minkowski~ID, as well as the Reflexive~ID of the chosen degenerations (for the graphs in part~(i) and~(ii) of the theorem). If a variety has a terminal degeneration, its Minkowski~ID is written in bold; if the degeneration used is terminal, its Reflexive~ID is also made bold.


 Note that if the choice of polytopes from part~(i) is used in a graph that includes projections with centres in conics, one gets a directed graph with $6$ roots (the additional one being $\XX{3}{28}$). To rectify that, a different choice of degeneration needed to be made for $4$ varieties in part~(ii).
\end{proof}
\begin{table}[h]
 \caption{Degeneration choice}\label{table:proj:choice}
 \begin{center}
  \begin{tabular}{cc|cc||cc|cc||cc|cc}
Var.		&	ID	&	(i)	 	&	(ii)	 	&	Var.	&	ID	&	(i)	&	(ii)	&	Var.	&	ID	&	(i)	&	 (ii)	\\
\hline
$\PP^3$		&	$\mathbf{1}$	&	\multicolumn{2}{c||}{$\mathbf{1}$}	&	$\XX{3}{27}$	&	$\mathbf{45}$	&	 \multicolumn{2}{c||}{$\mathbf{31}$}	&	$\XX{4}{2}$	&	$110$		&	\multicolumn{2}{c}{$668$}\\
$\XX{2}{33}$	&	$\mathbf{2}$	&	\multicolumn{2}{c||}{$\mathbf{7}$}	&	$B_5$		&	$\mathbf{46}$	&	 \multicolumn{2}{c||}{$\mathbf{68}$}	&	$\XX{4}{1}$	&	$\mathbf{111}$	&	\multicolumn{2}{c}{$\mathbf{1530}$}\\
$Q^3$		&	$\mathbf{3}$	&	\multicolumn{2}{c||}{$\mathbf{4}$}	&	$\XX{4}{11}$	&	$\mathbf{48}$	&	 \multicolumn{2}{c||}{$\mathbf{85}$}	&	$\XX{3}{8}$	&	$112$		&	\multicolumn{2}{c}{$1082$}\\
$\XX{2}{30}$	&	$\mathbf{4}$	&	\multicolumn{2}{c||}{$\mathbf{23}$}	&	$\XX{3}{21}$	&	$\mathbf{49}$	&	 \multicolumn{2}{c||}{$\mathbf{214}$}	&	$V_{22}$	&	$\mathbf{113}$	&	\multicolumn{2}{c}{$\mathbf{1943}$}\\
$\XX{2}{28}$	&	$\mathbf{5}$	&	\multicolumn{2}{c||}{$\mathbf{69}$}	&	$\XX{4}{9}$	&	$\mathbf{54}$	&	 \multicolumn{2}{c||}{$\mathbf{217}$}	&	$\XX{3}{6}$	&	$117$		&	\multicolumn{2}{c}{$1501$}\\
$\XX{2}{36}$	&	$\mathbf{6}$	&	\multicolumn{2}{c||}{$\mathbf{8}$}	&	$\XX{4}{8}$	&	$\mathbf{57}$	&	 \multicolumn{2}{c||}{$\mathbf{425}$}	&	$\XX{2}{12}$	&	$\mathbf{118}$	&	\multicolumn{2}{c}{$\mathbf{2356}$}\\
$\XX{2}{35}$	&	$\mathbf{7}$	&	\multicolumn{2}{c||}{$\mathbf{6}$}	&	$\XX{2}{26}$	&	$\mathbf{58}$	&	 \multicolumn{2}{c||}{$175^{\star}$}	&	$\XX{2}{13}$	&	$119$		&	\multicolumn{2}{c}{$1924$}\\
$\XX{3}{29}$	&	$\mathbf{8}$	&	\multicolumn{2}{c||}{$\mathbf{27}$}	&	$\XX{5}{2}$	&	$\mathbf{64}$	&	 \multicolumn{2}{c||}{$\mathbf{220}$}	&	$\XX{2}{11}$	&	$120$		&	\multicolumn{2}{c}{$1701$}\\
$\XX{2}{34}$	&	$\mathbf{10}$	&	\multicolumn{2}{c||}{$\mathbf{24}$}	&	$\XX{4}{7}$	&	$\mathbf{65}$	&	 \multicolumn{2}{c||}{$\mathbf{740}$}	&	$\XX{2}{14}$	&	$122$		&	\multicolumn{2}{c}{$2353$}\\
$\XX{3}{30}$	&	$\mathbf{11}$	&	\multicolumn{2}{c||}{$\mathbf{29}$}	&	$\XX{3}{15}$	&	$\mathbf{67}$	&	 \multicolumn{2}{c||}{$\mathbf{420}$}	&	$V_{18}$	&	$124$		&	\multicolumn{2}{c}{$2703$}\\
$\XX{3}{26}$	&	$\mathbf{12}$	&	\multicolumn{2}{c||}{$\mathbf{26}$}	&	$\XX{4}{5}$	&	$\mathbf{68}$	&	 \multicolumn{2}{c||}{$\mathbf{427}$}	&	$\XX{3}{3}$	&	$135$		&	\multicolumn{2}{c}{$2678$}\\
$\XX{3}{22}$	&	$\mathbf{13}$	&	\multicolumn{2}{c||}{$\mathbf{76}$}	&	$\XX{2}{22}$	&	$\mathbf{69}$	&	 \multicolumn{2}{c||}{$373^{\star}$}	&	$\XX{7}{1}$	&	$136$		&	\multicolumn{2}{c}{$506$}\\
$\XX{3}{31}$	&	$\mathbf{14}$	&	\multicolumn{2}{c||}{$\mathbf{28}$}	&	$\XX{3}{13}$	&	$\mathbf{70}$	&	 \multicolumn{2}{c||}{$\mathbf{737}$}	&	$\XX{3}{5}$	&	$138$		&	\multicolumn{2}{c}{$1367$}\\
$\XX{2}{31}$	&	$\mathbf{15}$	&	\multicolumn{2}{c||}{$\mathbf{70}$}	&	$\XX{3}{11}$	&	$\mathbf{72}$	&	 \multicolumn{2}{c||}{$\mathbf{732}$}	&	$\XX{2}{9}$	&	$139$		&	\multicolumn{2}{c}{$3136$}\\
$\XX{3}{25}$	&	$\mathbf{16}$	&	\multicolumn{2}{c||}{$\mathbf{74}$}	&	$\XX{2}{18}$	&	$74$		&	\multicolumn{2}{c||}{$1090$}		 &	$B_2$		&	$140$		&	\multicolumn{2}{c}{$428$}\\
$\XX{3}{23}$	&	$\mathbf{17}$	&	\multicolumn{2}{c||}{$\mathbf{205}$}	&	$B_4$		&	$\mathbf{75}$	&	 \multicolumn{2}{c||}{$154^{\star}$}	&	$\XX{3}{4}$	&	$142$		&	\multicolumn{2}{c}{$2222$}\\
$\XX{3}{19}$	&	$\mathbf{18}$	&	\multicolumn{2}{c||}{$\mathbf{206}$}	&	$\XX{5}{3}$	&	$\mathbf{76}$	&	 \multicolumn{2}{c||}{$\mathbf{219}$}	&	$V_{16}$	&	$143$		&	\multicolumn{2}{c}{$2482$}\\
$\XX{2}{27}$	&	$\mathbf{19}$	&	\multicolumn{2}{c||}{$\mathbf{201}$}	&	$\XX{2}{23}$	&	$\mathbf{78}$	&	 \multicolumn{2}{c||}{$\mathbf{411}$}	&	$\XX{2}{8}$	&	$144$		&	\multicolumn{2}{c}{$1969$}\\
$\XX{3}{14}$	&	$\mathbf{21}$	&	\multicolumn{2}{c||}{$\mathbf{203}$}	&	$\XX{4}{6}$	&	$\mathbf{81}$	&	 \multicolumn{2}{c||}{$\mathbf{426}$}	&	$\XX{2}{10}$	&	$145$		&	\multicolumn{2}{c}{$3036$}\\
$\XX{3}{9}$	&	$22$		&	\multicolumn{2}{c||}{$374$}		&	$\XX{4}{4}$	&	$\mathbf{83}$	&	\multicolumn{2}{c||}{$\mathbf{741}$}	&	 $V_{14}$	&	$147$		&	\multicolumn{2}{c}{$3283$}\\
$\XX{2}{32}$	&	$\mathbf{24}$	&	\multicolumn{2}{c||}{$\mathbf{22}$}	&	$\XX{2}{21}$	&	$\mathbf{84}$	&	 \multicolumn{2}{c||}{$\mathbf{731}$}	&	$\XX{2}{7}$	&	$148$		&	\multicolumn{2}{c}{$3239$}\\
$\XX{3}{28}$	&	$\mathbf{28}$	&	$\mathbf{30}$	&	$\mathbf{81}$	&	$\XX{3}{12}$	&	$\mathbf{85}$	&	 \multicolumn{2}{c||}{$723^{\star}$}	&	$\XX{2}{6}$	&	$149$		&	\multicolumn{2}{c}{$3319$}\\
$\XX{4}{13}$	&	$\mathbf{29}$	&	\multicolumn{2}{c||}{$\mathbf{84}$}	&	$\XX{2}{19}$	&	$\mathbf{86}$	&	 \multicolumn{2}{c||}{$\mathbf{1109}$}	&	$V_{12}$	&	$150$		&	\multicolumn{2}{c}{$3966$}\\
$\XX{3}{24}$	&	$\mathbf{31}$	&	\multicolumn{2}{c||}{$\mathbf{78}$}	&	$\XX{2}{20}$	&	$\mathbf{87}$	&	 \multicolumn{2}{c||}{$\mathbf{1110}$}	&	$\XX{3}{1}$	&	$154$		&	\multicolumn{2}{c}{$3350$}\\
$\XX{4}{12}$	&	$\mathbf{34}$	&	$\mathbf{83}$	&	$190^{\star}$	&	$\XX{4}{3}$	&	$\mathbf{88}$	&	 \multicolumn{2}{c||}{$\mathbf{735}$}	&	$\XX{8}{1}$	&	$155$		&	\multicolumn{2}{c}{$769$}\\
$\XX{2}{29}$	&	$\mathbf{35}$	&	$\mathbf{72}$	&	$\mathbf{204}$	&	$\XX{3}{10}$	&	$\mathbf{99}$	&	 \multicolumn{2}{c||}{$\mathbf{1113}$}	&	$\XX{3}{2}$	&	$157$		&	\multicolumn{2}{c}{$2791$}\\
$\XX{4}{10}$	&	$\mathbf{37}$	&	\multicolumn{2}{c||}{$\mathbf{215}$}	&	$\XX{5}{1}$	&	$100$		&	\multicolumn{2}{c||}{$673$}		&	 $\XX{2}{5}$	&	$158$		&	\multicolumn{2}{c}{$3453$}\\
$\XX{3}{20}$	&	$\mathbf{38}$	&	\multicolumn{2}{c||}{$\mathbf{80}$}	&	$\XX{2}{17}$	&	$\mathbf{101}$	&	 \multicolumn{2}{c||}{$\mathbf{1528}$}	&	$V_{10}$	&	$160$		&	\multicolumn{2}{c}{$4132$}\\
$\XX{3}{17}$	&	$\mathbf{39}$	&	\multicolumn{2}{c||}{$\mathbf{210}$}	&	$\XX{3}{7}$	&	$\mathbf{103}$	&	 \multicolumn{2}{c||}{$\mathbf{1529}$}	&	$\XX{2}{4}$	&	$161$		&	\multicolumn{2}{c}{$4031$}\\
$\XX{3}{18}$	&	$\mathbf{41}$	&	$\mathbf{212}$	&	$\mathbf{419}$	&	$\XX{2}{16}$	&	$104$		&	\multicolumn{2}{c||}{$1485$}		 &	$V_8$		&	$163$		&	\multicolumn{2}{c}{$4205$}\\
$\XX{3}{16}$	&	$\mathbf{42}$	&	\multicolumn{2}{c||}{$\mathbf{418}$}	&	$B_3$		&	$106$		&	\multicolumn{2}{c||}{$232$}		&	 $V_6$		&	$164$		&	\multicolumn{2}{c}{$4286$}\\
$\XX{2}{25}$	&	$\mathbf{43}$	&	\multicolumn{2}{c||}{$\mathbf{410}$}	&	$\XX{6}{1}$	&	$107$		&	\multicolumn{2}{c||}{$357$}		&	 $V_4$		&	$165$		&	\multicolumn{2}{c}{$4312$}\\
$\XX{2}{24}$	&	$\mathbf{44}$	&	\multicolumn{2}{c||}{$\mathbf{412}$}	&	$\XX{2}{15}$	&	$109$		&	\multicolumn{2}{c||}{$1599$}		 &	$ $		&	$ $		&	\multicolumn{2}{c}{$ $}

  \end{tabular}

 \end{center}
\end{table}

%
%
%

\begin{remark}
 These graphs are not the only ones that satisfy the statement of the theorem --- for many smooth Fano threefolds, a choice of degeneration needed to be made. In this case, the choice was made according to the following priorities:
 \begin{enumerate}
  \item to minimize the number of the graphs' roots;
  \item to minimize the number of the connected components of the graphs;
  \item where possible, to use a terminal degeneration of the variety.
 \end{enumerate}
 In the choices above, the numbers of roots and connected components are indeed minimal, and a terminal degeneration was used where available except in $4$ cases ($5$ cases if projections with centres in smooth conics were used), marked by $\star$, where this would lead to getting additional roots or connected components. The degenerations in question are those for varieties $B_4$, $\XX{2}{22}$, $\XX{2}{26}$, $\XX{3}{12}$ (and~$\XX{4}{12}$ if projections with centres at conics were used).
\end{remark}

\appendix
\section{Projection--minimizing graph}\label{sect:minGraph}
%
%

Table~\ref{table:projection graph} contains a choice of degenerations $(X_1,X_2)$ for every arrow in the graph of projections discussed in Theorem~\ref{thm:proj:graph_proj-mut}.
The ``From'' column gives the starting variety of the projection along with the variety's degree and Minkowski ID. The ``To'' column lists all the possible destination varieties, and, for each of them, an example of the corresponding pair of degenerations $(X_1,X_2)$ (in terms of their Reflexive ID's).
\begin{table}[h]
 \caption{Projection graph}\label{table:projection graph}
\begin{tabular}{l||lll||@{\ \ \ \ }r@{:\ \ (}rl@{)\ \ \ \ \ \ \ }r@{:\ \ (}rl@{)\ \ \ \ \ \ \ }r@{:\ \ (}rl@{)}}
\multicolumn{4}{c}{From:}&\multicolumn{9}{c}{To:}\\
Level&Var.&P.S.&Deg.&\multicolumn{9}{c}{}\\
\hline
0&1.1&1&64&2.35&1&6&\multicolumn{6}{c}{ }\\
0&2.36&6&62&2.34&8&24&\multicolumn{6}{c}{ }\\
0&1.2&3&54&3.31&2&20&2.30&2&14&\multicolumn{3}{c}{ }\\
0&2.33&2&54&3.30&7&29&3.26&7&26&\multicolumn{3}{c}{ }\\
0&3.29&8&50&3.26&27&73&3.24&27&78&\multicolumn{3}{c}{ }\\
0&3.27&45&48&1.7&18&43&\multicolumn{6}{c}{ }\\
0&3.28&28&48&4.12&52&181&2.29&52&106&\multicolumn{3}{c}{ }\\
0&4.11&48&42&3.21&62&184&4.9&191&291&2.26&62&163\\
0&2.28&5&40&2.27&34&305&3.14&34&143&\multicolumn{3}{c}{ }\\
0&3.22&13&40&3.19&139&270&3.17&76&210&3.15&139&316\\
0&5.3&76&36&4.6&195&409&2.21&114&238&\multicolumn{3}{c}{ }\\
0&6.1&107&30&4.2&284&602&\multicolumn{6}{c}{ }\\
0&3.9&22&26&2.18&447&1999&\multicolumn{6}{c}{ }\\
0&7.1&136&24&\multicolumn{9}{c}{ }\\
0&8.1&155&18&\multicolumn{9}{c}{ }\\
\hline
1&2.35&7&56&2.32&6&13&\multicolumn{6}{c}{ }\\
1&2.34&10&54&2.31&5&21&\multicolumn{6}{c}{ }\\
1&3.31&14&52&3.25&20&47&\multicolumn{6}{c}{ }\\
1&3.30&11&50&4.13&29&60&\multicolumn{6}{c}{ }\\
1&2.30&4&46&3.25&14&47&3.23&23&77&\multicolumn{3}{c}{ }\\
1&3.26&12&46&3.23&26&77&3.20&26&44&\multicolumn{3}{c}{ }\\
1&4.12&34&44&4.10&309&638&3.18&309&639&\multicolumn{3}{c}{ }\\
1&3.24&31&42&4.10&169&390&3.20&169&259&\multicolumn{3}{c}{ }\\
1&1.7&46&40&1.6&221&429&\multicolumn{6}{c}{ }\\
1&2.29&35&40&3.20&106&259&3.18&19&63&2.25&106&251\\
1&2.27&19&38&3.18&71&212&3.16&71&213&2.24&157&322\\
1&3.19&18&38&3.16&75&213&3.13&35&93&\multicolumn{3}{c}{ }\\
1&3.21&49&38&4.8&488&1350&2.23&488&765&\multicolumn{3}{c}{ }\\
1&4.9&54&38&4.8&291&580&4.7&67&185&2.22&291&547\\
1&3.17&39&36&3.16&130&323&\multicolumn{6}{c}{ }\\
1&2.26&58&34&2.22&1263&1749&2.23&481&765&2.19&957&1230\\
1&3.14&21&32&2.24&143&631&3.11&308&656&3.7&143&263\\
1&3.15&67&32&2.22&2125&2450&3.13&564&977&4.4&598&1355\\
&&&&3.11&564&985&3.12&954&1435&2.17&564&931\\
1&4.6&81&32&4.4&1682&2179&3.12&1682&2110&4.3&1682&1976\\
&&&&2.17&1708&2182&\multicolumn{6}{c}{ }\\
1&2.21&84&28&2.19&123&338&2.20&238&1320&3.10&123&267\\
&&&&2.17&1753&2252&2.12&2729&3077&\multicolumn{3}{c}{ }\\
1&4.2&110&26&3.8&602&1867&1.17&602&930&3.6&1048&1437\\
1&2.18&74&24&2.16&449&1998&2.13&1441&1824&\multicolumn{3}{c}{ }\\
\hline
\end{tabular}
\end{table}

\begin{table}[h]
\begin{tabular}{llll||@{\ \ \ \ }r@{:\ \ (}rl@{)\ \ \ \ \ \ \ }r@{:\ \ (}rl@{)\ \ \ \ \ \ \ }r@{:\ \ (}rl@{)}}
&\multicolumn{3}{c}{From:}&\multicolumn{9}{c}{}\\
Level&Var.&Deg.&P.S.&\multicolumn{9}{c}{Projection to}\\
\hline
2&2.32&24&48&\multicolumn{9}{c}{ }\\
2&2.31&15&46&\multicolumn{9}{c}{ }\\
2&4.13&29&46&\multicolumn{9}{c}{ }\\
2&3.25&16&44&\multicolumn{9}{c}{ }\\
2&3.23&17&42&\multicolumn{9}{c}{ }\\
2&4.10&37&40&5.2&180&408&\multicolumn{6}{c}{ }\\
2&3.20&38&38&5.2&44&194&\multicolumn{6}{c}{ }\\
2&3.18&41&36&4.5&63&151&\multicolumn{6}{c}{ }\\
2&4.8&57&36&\multicolumn{9}{c}{ }\\
2&3.16&42&34&4.5&615&981&\multicolumn{6}{c}{ }\\
2&4.7&65&34&\multicolumn{9}{c}{ }\\
2&1.6&75&32&1.5&433&742&\multicolumn{6}{c}{ }\\
2&2.25&43&32&\multicolumn{9}{c}{ }\\
2&2.22&69&30&\multicolumn{9}{c}{ }\\
2&2.23&78&30&2.15&1165&1599&\multicolumn{6}{c}{ }\\
2&2.24&44&30&\multicolumn{9}{c}{ }\\
2&3.13&70&30&5.1&93&285&\multicolumn{6}{c}{ }\\
2&4.4&83&30&\multicolumn{9}{c}{ }\\
2&3.11&72&28&\multicolumn{9}{c}{ }\\
2&3.12&85&28&\multicolumn{9}{c}{ }\\
2&4.3&88&28&\multicolumn{9}{c}{ }\\
2&2.19&86&26&2.15&2066&2397&2.11&1392&1701&\multicolumn{3}{c}{ }\\
2&2.20&87&26&1.16&1186&1559&\multicolumn{6}{c}{ }\\
2&3.10&99&26&4.1&267&489&1.16&3571&3764&\multicolumn{3}{c}{ }\\
2&2.17&101&24&2.9&2182&2462&\multicolumn{6}{c}{ }\\
2&3.7&103&24&2.14&1374&1721&\multicolumn{6}{c}{ }\\
2&3.8&112&24&2.14&2541&2958&3.5&984&1367&\multicolumn{3}{c}{ }\\
2&1.17&113&22&2.14&1141&1659&3.5&930&1367&1.16&4037&4085\\
&&&&1.14&3772&3891&\multicolumn{6}{c}{ }\\
2&2.16&104&22&1.16&2960&3233&2.11&1233&1701&\multicolumn{3}{c}{ }\\
2&3.6&117&22&2.14&3123&3921&1.16&2166&2494&3.3&3123&3445\\
&&&&3.4&2166&2394&\multicolumn{6}{c}{ }\\
2&2.12&118&20&1.16&1548&2500&2.11&1194&1701&2.9&2311&2606\\
&&&&2.6&3077&3319&\multicolumn{6}{c}{ }\\
2&2.13&119&20&1.16&3557&4085&3.3&1662&2070&3.4&2114&2544\\
&&&&1.15&1717&2024&2.9&2770&3136&\multicolumn{3}{c}{ }\\
\hline
\end{tabular}
\end{table}

\begin{table}[h]
\begin{tabular}{llll||@{\ \ \ \ }r@{:\ \ (}rl@{)\ \ \ \ \ \ \ }r@{:\ \ (}rl@{)\ \ \ \ \ \ \ }r@{:\ \ (}rl@{)}}
&\multicolumn{3}{c}{From:}&\multicolumn{9}{c}{}\\
Level&Var.&Deg.&P.S.&\multicolumn{9}{c}{Projection to}\\
\hline
3&5.2&64&36&\multicolumn{9}{c}{ }\\
3&4.5&68&32&\multicolumn{9}{c}{ }\\
3&5.1&100&28&\multicolumn{9}{c}{ }\\
3&1.5&106&24&1.4&1952&2364&\multicolumn{6}{c}{ }\\
3&4.1&111&24&\multicolumn{9}{c}{ }\\
3&2.15&109&22&2.8&1599&1969&\multicolumn{6}{c}{ }\\
3&2.14&122&20&2.10&3921&4002&\multicolumn{6}{c}{ }\\
3&3.5&138&20&\multicolumn{9}{c}{ }\\
3&1.16&124&18&2.10&3361&3617&\multicolumn{6}{c}{ }\\
3&2.11&120&18&2.8&1701&1969&\multicolumn{6}{c}{ }\\
3&3.3&135&18&2.7&3445&3592&\multicolumn{6}{c}{ }\\
3&3.4&142&18&2.10&2394&2746&3.2&2544&2791&\multicolumn{3}{c}{ }\\
3&1.15&143&16&2.7&3373&3592&3.2&2482&2791&1.13&4050&4119\\
3&2.9&139&16&\multicolumn{9}{c}{ }\\
3&1.14&147&14&1.13&3887&4119&2.5&3587&3736&1.12&4171&4200\\
3&2.6&149&12&1.12&3319&4007&\multicolumn{6}{c}{ }\\
\hline
4&1.4&140&16&\multicolumn{9}{c}{ }\\
4&2.10&145&16&\multicolumn{9}{c}{ }\\
4&2.7&148&14&3.1&3102&3329&\multicolumn{6}{c}{ }\\
4&2.8&144&14&\multicolumn{9}{c}{ }\\
4&3.2&157&14&\multicolumn{9}{c}{ }\\
4&1.13&150&12&2.4&3966&4031&\multicolumn{6}{c}{ }\\
4&2.5&158&12&2.4&3736&4031&\multicolumn{6}{c}{ }\\
4&1.12&160&10&1.11&3051&3314&\multicolumn{6}{c}{ }\\
\hline
5&3.1&154&12&\multicolumn{9}{c}{ }\\
5&2.4&161&10&\multicolumn{9}{c}{ }\\
5&1.11&163&8&\multicolumn{9}{c}{ }\\
\hline
\end{tabular}
\end{table}



\begin{figure}
 \caption{Connecting varieties by projections from points and lines}\label{fig:graph:projections:i}
 \begin{center}
  The Fano threefolds are denoted by their Minkowski ID's, each arrow signify a projection between them. See Table~\ref{table:proj:choice}, (i) for the explicit choice of degenerations for each of the varieties.

  \includegraphics[width=15cm]{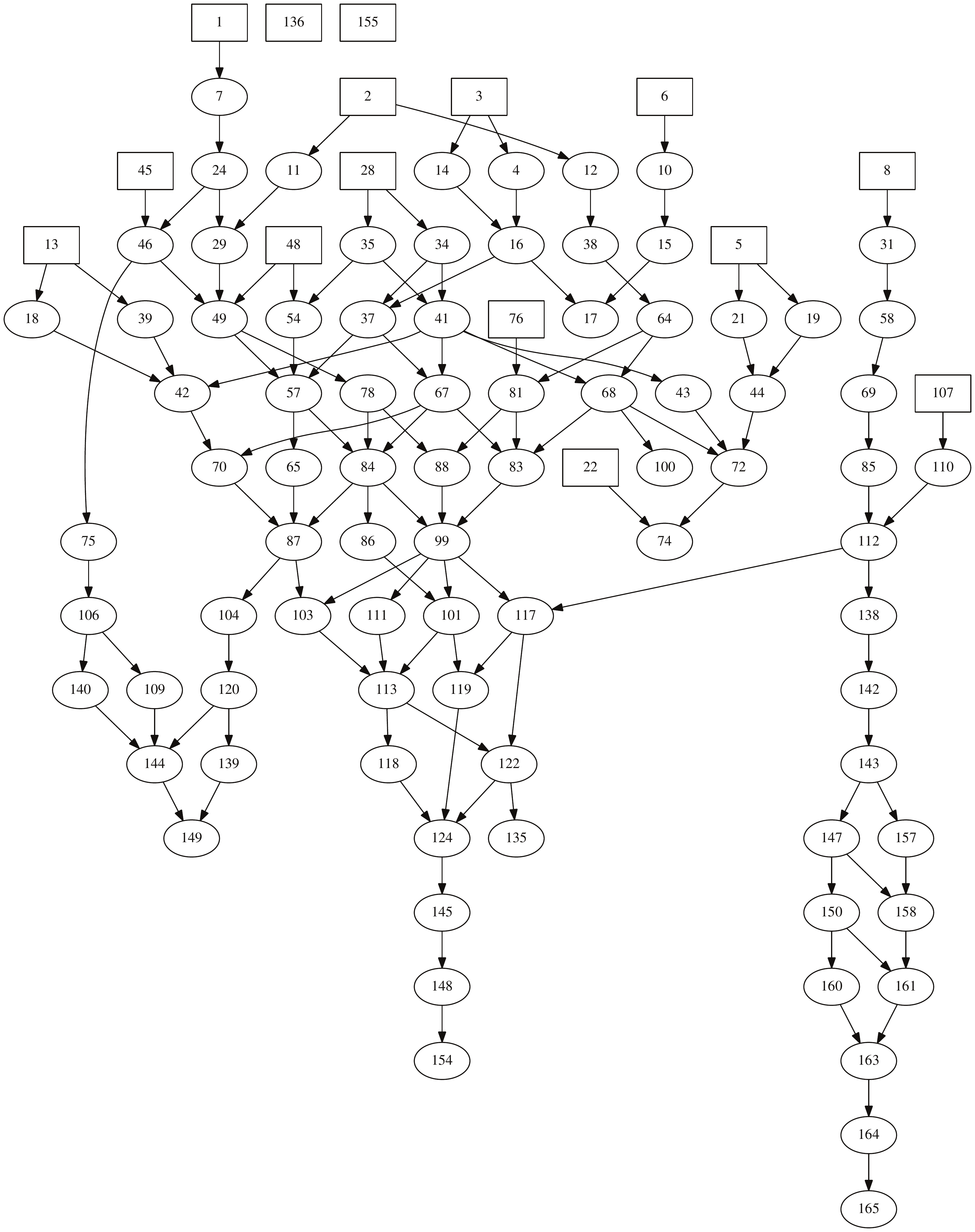}
  \end{center}
\end{figure}

\begin{figure}
 \caption{Connecting varieties by projections from points, lines, and conics}\label{fig:graph:projections:ii}
 \begin{center}
  The Fano threefolds are denoted by their Minkowski ID's, each arrow signify a projection between them. See Table~\ref{table:proj:choice}, (ii) for the explicit choice of degenerations for each of the varieties.

  \includegraphics[width=15cm]{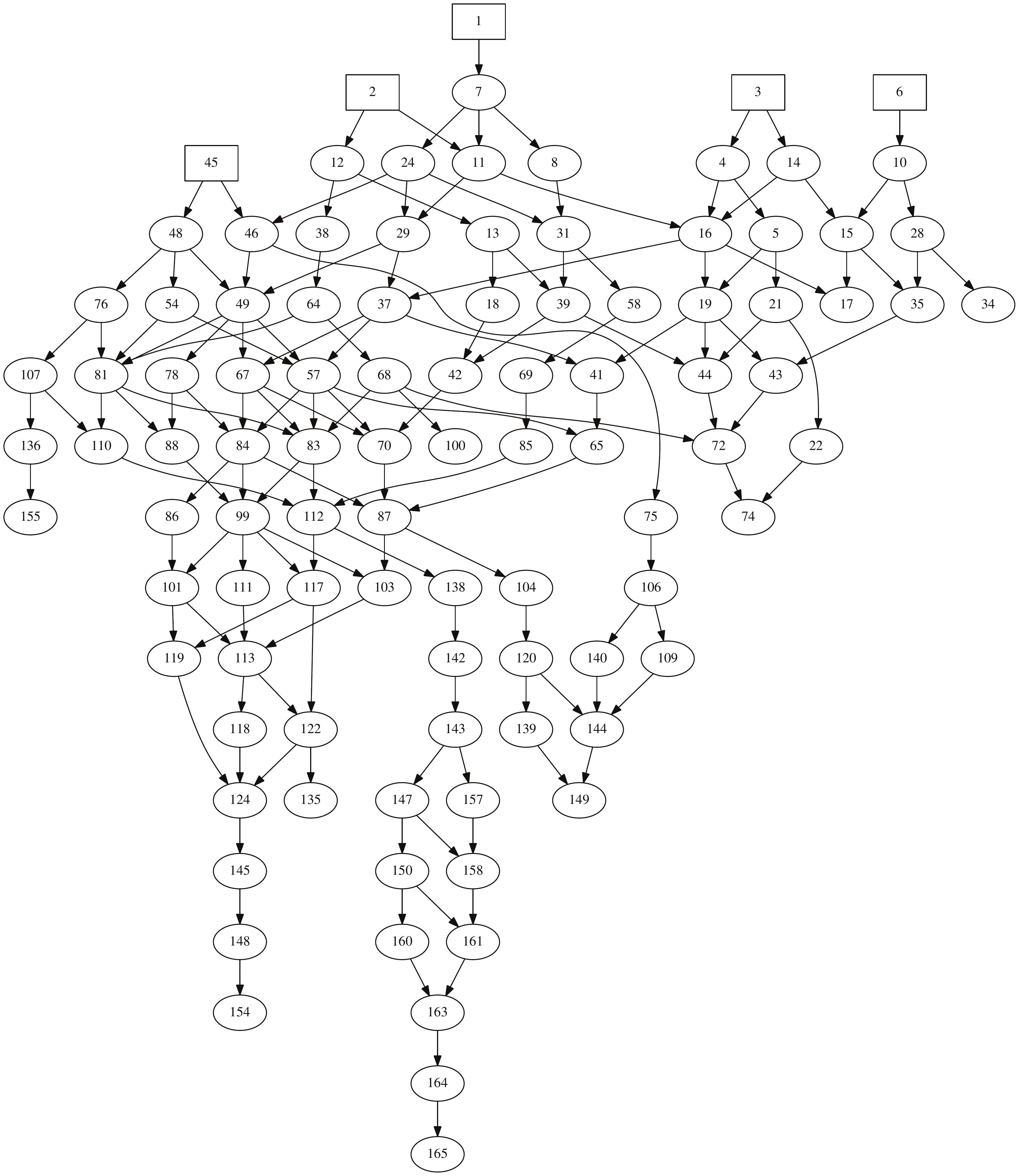}
  \end{center}
\end{figure}

\clearpage
\section{Tables for Toric Degenerations}\label{sec:appendix}

\section{Hilbert Scheme Component Dimensions for Smooth Fano Threefolds}\label{app:compdims}
In the following table, we compute $h^0(V,\mathcal N_{V/\PP^n})$ for each smooth Fano threefold with very ample anticanonical divisor. Here $V\hookrightarrow\PP^n$ denotes the anticanonical embedding. This value equals the dimension of the Hilbert scheme component of $\mathcal{hilb}_V$ corresponding to $V$.
\noindent


\bibliographystyle{plain}
\bibliography{bibliography}
\end{document}